\tikzstyle{vertex}=[inner sep = 3pt,circle,fill=white,draw,minimum size=0.20cm]
\tikzstyle{graphbox}=[inner sep =5pt,rounded corners=10pt, fill = white, draw,minimum size=2.0cm, minimum height=2.0cm]
\tikzstyle{graphcollection} = [inner sep=7pt, drop shadow,fill=lightlightgray,draw,minimum size=2.3cm, minimum height=2.3cm]
\def\rev#1{}
\theoremstyle{plain}
\newtheorem{theorem}{Theorem}
\newtheorem{lemma}{Lemma}
\newtheorem{corollary}{Corollary}
\theoremstyle{definition}
\newcommand{\BoxedGraph}[2]{%
\begin{tikzpicture}
\node[graphbox] (rnum)  at (0,0) {%
\begin{tikzpicture}[-stealth]%
  \foreach \name/\x/\y in {#1}%
    \node[vertex] (G-\name) at (\x,\y) {};
  \foreach \from/\to in {#2}%
    \draw[semithick] (G-\from) -- (G-\to);%
\end{tikzpicture}%
};%
\end{tikzpicture}%
}
 \newdimen\R
\newcommand{\BoxedCircularGraph}[2]{%
\begin{tikzpicture}
\node[graphbox] (rnum)  at (0,0) {%
\begin{tikzpicture}[-stealth]%
\path[use as bounding box] (-0.8,-0.8) rectangle (0.8,0.8); 
  \foreach \name/\x/\y in {#1}%
    \node[vertex] (G-\name) at (\x:\y) {};
  \foreach \from/\to in {#2}%
    \draw[semithick] (G-\from) -- (G-\to);%
\end{tikzpicture}%
};%
\end{tikzpicture}%
}
\newcommand{\BoxedCircularGraphNOBOUND}[2]{%
\begin{tikzpicture}
\node[graphbox] (rnum)  at (0,0) {%
\begin{tikzpicture}[-stealth]%
  \foreach \name/\x/\y in {#1}%
    \node[vertex] (G-\name) at (\x:\y) {};
  \foreach \from/\to in {#2}%
    \draw[semithick] (G-\from) -- (G-\to);%
\end{tikzpicture}%
};%
\end{tikzpicture}%
}
\newcommand{\BoxedCircularGraphNOBOUNDwithcustoarrow}[3]{%
\begin{tikzpicture}
\node[graphbox] (rnum)  at (0,0) {%
\begin{tikzpicture}[-#3]%
  \foreach \name/\x/\y in {#1}%
    \node[vertex] (G-\name) at (\x:\y) {};
  \foreach \from/\to in {#2}%
    \draw[semithick] (G-\from) -- (G-\to);%
\end{tikzpicture}%
};%
\end{tikzpicture}%
}
\definecolor{darkred}{rgb}{0.5,0,0}
\definecolor{darkgreen}{rgb}{0,0.5,0}
\definecolor{darkblue}{rgb}{0,0,0.5}
\definecolor{gray}{gray}{0.3}
\definecolor{lightlightgray}{gray}{0.95}
\definecolor{lightgray}{gray}{0.7}
\newcommand{\dunion}{\mathbin{\dot{\cup}}}
\DeclareMathOperator{\Aut}{Aut}
\newcommand{\swappingpoints}{W}
\DeclareMathOperator{\dist}{dist}
\def\({\bigl(}  \def\){\bigr)}
\def\nfrac#1#2{{\textstyle\frac{#1}{#2}}}
\def\Aut{\mathrm{Aut}} 
\def\tto{\mathord{\to}}
\def\C{\mathcal{C}}
\title{Switching Reconstruction of Digraphs}
\author{Brendan D.~McKay, Pascal Schweitzer\thanks{This
work is supported 
by the Australian Research Council, the National Research Fund of Luxembourg, and co-funded under the Marie Curie 
Actions of the European Commission (FP7-COFUND). 
Current address for Schweitzer: Forschungsinstitut f\"{u}r Mathematik,
ETH Z\"{u}rich, Switzerland.} \\[2ex]
Research School of Computer Science\\
The Australian National University\\ 
Canberra, ACT 0200, Australia\\
{\tt bdm@cs.anu.edu.au, Pascal.Schweitzer@anu.edu.au}
}
\begin{document}

\maketitle

 \begin{abstract}
   Switching about a vertex in a digraph means to reverse the direction of every edge
   incident with that vertex.
   Bondy and Mercier introduced the problem of whether a digraph can be
   reconstructed up to isomorphism from the multiset of isomorphism types
   of digraphs obtained by switching about each vertex.
   Since the largest known non-reconstructible oriented graphs have
   8 vertices, it is natural to ask whether there are any larger non-reconstructible graphs.
   In this paper we continue the investigation of this question.
   We find that there are exactly 44 non-reconstructible oriented graphs
   whose underlying undirected graphs have maximum degree at most~2.
   We also determine the full set of switching-stable oriented graphs, which
   are those graphs for which all switchings return a digraph isomorphic to the original.
 \end{abstract}

\section{Introduction}

In combinatorics, a reconstruction problem asks whether a combinatorial object can be reconstructed from its ``deck'', where the deck is a multi-set of objects that are slight modifications of the original object. For example, in Kelly and Ulam's reconstruction problem~\cite{MR0087949, MR0120127} the deck of an~$n$-vertex graph~$G$ is the multiset of~$n$ graphs that are each obtained by deleting one vertex of~$G$. Kelly and Ulam conjectured that every graph on at least three vertices is reconstructible from its vertex-deleted deck.
Another way of stating this is that non-isomorphic graphs on at least three vertices have different decks. Following their work, numerous other types of decks and their associated reconstruction problems have been considered. Among them is Stanley's switching reconstruction problem~\cite{MR787322}, where the elements of the deck of a graph are obtained by choosing a vertex~$v$ and then replacing all edges incident with~$v$ with non-edges and vice versa.
As with the previous reconstruction problem, it is conjectured that there are only finitely many graphs that cannot be reconstructed from their deck. For an overview of the various reconstruction variants we refer the reader to the existing surveys~\cite{MR1047783, BondySurvey,MR2096338}.

Recently, Bondy and Mercier~\cite{BondyMercier,Mercier:thesis} introduced further switching reconstruction problems. In their context, the switching deck of a digraph contains all digraphs that are obtained by reversing the orientation of all edges incident with a specified vertex.
In their paper, Bondy and Mercier observe that a subgraph-counting
result of Ellingham and Royle~\cite{MR1152444} applies to switching of connected digraphs. They provide an alternative proof that also applies to disconnected graphs and thereby show that 
for a digraph on~$n$ vertices the number of induced
copies of any digraph on~$k$ vertices is reconstructible whenever $n$ is not of the form $4j$ for $j\le\lfloor k/2\rfloor$.

Bondy and Mercier also list all decks of the oriented graphs on four
vertices and, using this, derive many sets of switching non-reconstructible
digraphs on four and eight vertices. Our computer search has shown that there are, in fact, 5559 non-reconstructible
oriented graphs on 8 vertices. They ask whether there exist non-reconstructible oriented graphs on more than eight vertices, which, by their theorem, must have at least 12 vertices, since the number of vertices must be divisible by four. They also ask which oriented graphs have the property that all switchings are isomorphic. As examples of such digraphs they mention vertex transitive digraphs and switching-stable digraphs, i.e., digraphs for which every switching is isomorphic to the digraph itself.

\paragraph{Our results:}
By an \textit{oriented graph} we mean a digraph without loops, parallel edges, or 2-cycles.  Ignoring the orientations of the edges of an oriented graph gives its \textit{underlying undirected graph}. When we refer to the \textit{degree} of a vertex of an oriented graph, we always mean the degree in the underlying undirected graph; otherwise we will write ``in-degree'' or ``out-degree''.

In this paper we classify all switching-stable oriented graphs. We show that an oriented graph is switching-stable if and only if each of its components has 1, 2, or 4 vertices and each component on 4 vertices is isomorphic to the oriented\rev{P2/26} cycle that has a unique directed path of length 3.

We also determine all non-reconstructible oriented graphs with maximum degree at most~2. There are exactly~44. They yield 29 sets~$\{G,H\}$ of non-isomorphic oriented graphs of maximum degree at most 2 such that~$G$ and~$H$ have the same deck. To determine all these oriented graphs, we prove with combinatorial arguments that there are none on more than 30 vertices and then with an efficient enumeration algorithm using the computer determine all non-reconstructible oriented graphs with at most 30 vertices.

\paragraph{Structure of the paper:} We first formally define the digraph switching reconstruction problem and the related notions (Section~\ref{sec:prelims}). We then classify all switching-stable oriented graphs (Section~\ref{sec:stable:graphs}) and devise some properties of switching-stable sets (Section~\ref{sec:stable:sets}). To classify all non-reconstructible oriented graphs of maximum degree at most 2 we determine all paths (Section~\ref{sec:paths}) and all cycles (Section~\ref{sec:cycles}) that are not reconstructible from their~$t$-decks. An analysis of disconnected non-reconstructible oriented graphs (Section~\ref{sec:disconnected}) then allows us to assemble the statements and determine all non-reconstructible oriented graphs of maximum degree at most~2 (Section~\ref{sec:assemble:all:max:degree:2}).

\section{Preliminaries}\label{sec:prelims}

In this paper we consider labelled graphs which are mostly directed.
For any graph~$G$, we denote by~$\<G\>$ the isomorphism class of~$G$. 
The sign ``='' means equality (never isomorphism).
The automorphism group of~$G$,  denoted~$\Aut(G)$, acts on the vertex set of~$G$, which we will always denote as~$V$.
We denote by~$S_n$ the symmetric\rev{P2/53} group on~$V$.
For~$\gamma,\delta \in S_n$, by~$v^\gamma$ we mean
$\gamma(v)$, by~$v^{\gamma\delta}$ we mean $(v^\gamma)^\delta$ and by~$G^\gamma$ we mean
the graph on the same vertices as~$G$ for which~$v^\gamma\tto w^\gamma$ is an edge
if and only if~$v\tto w$ is an edge of~$G$.

For a digraph~$G$ and a vertex~$v \in V$,~$G_v$ is the \emph{switching of~$G$ at vertex v}, that is, the graph which
equals~$G$ except that the direction of all edges incident with~$v$ is reversed.
$G_{vw}$ means~$(G_v)_w$.
Easy properties are:~$G_{vv}=G$,~$G_{vw}=G_{wv}$, and
$(G_v)^\gamma=(G^\gamma)_{v^\gamma}$, for~$v,w\in V, \gamma\in S_n$.
For a multiset~$W=\{ w_1,\ldots,w_k\}$ where
$w_1,\ldots,w_k\in V$,~$G_W$ means~$G_{w_1\cdots w_k}$.
Clearly, $G_W$ depends only on the parity of the multiplicities of
elements of $W$, and if $W$ is a set (not multiset) we have $G_{V\setminus W}=G_W$.
The property $(G_v)^\gamma=(G^\gamma)_{v^\gamma}$ generalizes to
$(G_W)^\gamma=(G^\gamma)_{W^\gamma}$ for $W\subseteq V, \gamma\in S_n$,
from which it follows that $G_X=G^\gamma$ and~$G_Y=G^\delta$
together imply\rev{P3/17}
$G_{YX^\delta}=G^{\gamma\delta}$. Besides these easy properties, we also frequently need the following observations:

\begin{lemma}\label{lem:switch:one:vertex:observation}
If~$G_W = G^{\delta}$ then~$(G_v)_{Wvv^\delta}= (G_v)^{\delta}$. 
\end{lemma}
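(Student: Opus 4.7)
The plan is to chain together the elementary identities listed just before the lemma, treating the switchings as operations that only care about parity of multiplicities so that we can freely reorder and absorb vertices.

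First I would rewrite $(G_v)_{Wvv^\delta}$ as a single switching of $G$ by a multiset. Concretely, $(G_v)_{Wvv^\delta} = G_{\{v\} \cup W \cup \{v, v^\delta\}}$. In this multiset, $v$ now appears with multiplicity $2 + [v \in W]$, which has the same parity as $[v \in W]$; equivalently, the two new copies of $v$ cancel against the outer $v$ applied twice. Using $G_{vv}=G$ and $G_{vw}=G_{wv}$, this simplifies to $G_{W\cup\{v^\delta\}} = (G_W)_{v^\delta}$.

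Next I would substitute the hypothesis $G_W = G^\delta$ to get $(G_W)_{v^\delta} = (G^\delta)_{v^\delta}$. Finally, the stated property $(G^\gamma)_{v^\gamma} = (G_v)^\gamma$ applied with $\gamma = \delta$ turns $(G^\delta)_{v^\delta}$ into $(G_v)^\delta$, yielding the claim.

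The whole argument is a short symbolic manipulation; there is no real obstacle since every step is just one of the bulleted identities from the paragraph preceding the lemma. The only point worth stating carefully is the cancellation of the doubled $v$, which relies on the remark that $G_W$ depends only on the parity of multiplicities in $W$.
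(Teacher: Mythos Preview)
Your argument is correct and follows exactly the same chain as the paper's proof, namely $(G_v)_{Wvv^\delta} = (G_W)_{v^\delta} = (G^{\delta})_{v^\delta} = (G_v)^{\delta}$; you have simply spelled out the parity cancellation in the first step more explicitly. One tiny notational caution: writing $G_{W\cup\{v^\delta\}}$ as a set union is slightly off when $v^\delta\in W$, but your next expression $(G_W)_{v^\delta}$ is the correct multiset reading and matches the paper.
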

\begin{proof}
Using the fact that~$(G_v)^\delta=(G^\delta)_{v^\delta}$, we get~$(G_v)_{Wvv^\delta} = (G_W)_{v^\delta} = (G^{\delta})_{v^\delta} = (G_v)^{\delta}$.
\end{proof}

\begin{lemma}\label{swdiff}
 If~$G$ is a connected oriented graph and~$W,W'\subseteq V$ satisfy
 $G_W=G_{W'}$, then $W'=W$ or $W'=V\setminus W$.
\end{lemma}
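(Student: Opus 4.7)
The plan is to reduce the statement to the special case $W=\emptyset$, i.e.\ to showing that a connected oriented graph $G$ satisfies $G_U=G$ only for $U=\emptyset$ or $U=V$. First I would combine the two hypotheses by switching both sides of $G_W=G_{W'}$ by the multiset $W\cup W'$. Using the observations that $G_{vv}=G$ and that $G_X$ depends only on the parities of multiplicities, this yields $G=G_{W\triangle W'}$, where $\triangle$ denotes symmetric difference. Hence it suffices to prove that $W\triangle W'\in\{\emptyset,V\}$, which translates directly to the desired conclusion $W'=W$ or $W'=V\setminus W$.

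Next, set $U=W\triangle W'$ and analyse the condition $G_U=G$ edge by edge. For any edge $v\tto w$ of $G$, switching at $U$ reverses this edge precisely when exactly one of $v,w$ lies in $U$. Since the result $G_U$ must equal $G$ itself, the reversed edge $w\tto v$ would have to already be an edge of $G$; but $G$ is an oriented graph and thus contains no $2$-cycle, giving a contradiction. Consequently, every edge of $G$ has either both endpoints in $U$ or neither endpoint in $U$.

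Finally I would invoke connectivity. The previous step shows that there are no edges (in the underlying undirected graph) between $U$ and $V\setminus U$, so $U$ is a union of connected components of $G$. Since $G$ is connected, $U\in\{\emptyset,V\}$, completing the argument.

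The proof is essentially mechanical; the only real content is the reduction via symmetric difference, and the only place where all hypotheses are used is in the contradiction with the absence of $2$-cycles together with connectivity. I would not expect any serious obstacle — the mild subtlety is just being careful that switching by $W$ and then by $W'$ produces $G_{W\triangle W'}$ rather than $G_{W\cup W'}$, which follows from the parity observation recorded just before the lemma.
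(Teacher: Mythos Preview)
Your argument is correct and is essentially the paper's proof unpacked: both rest on the observation that an edge with exactly one endpoint in $W\triangle W'$ receives different orientations in $G_W$ and $G_{W'}$, together with connectivity (the paper states this directly as a one-line contrapositive rather than first reducing to $G_U=G$). One small slip in your reduction step: to obtain $G=G_{W\triangle W'}$ you should switch both sides by $W$ (or by $W'$), not by ``the multiset $W\cup W'$''---switching by $W$ cancels the left side to $G$ and leaves $G_{W'\triangle W}$ on the right, whereas switching by the set $W\cup W'$ yields $G_{W'\setminus W}=G_{W\setminus W'}$ instead.
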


\begin{proof} Supposing otherwise, there are adjacent vertices~$v,v'$
such that~$v\in W\bigtriangleup W'$ and~$v'\notin W\bigtriangleup W'$.
Then~$G_W$ and~$G_{W'}$ differ in the orientation of the edge between~$v$ and~$v'$.
\end{proof}

The \emph{deck\/} of a digraph~$G$ is the multiset
$\C(G) = \{ \<G_v\> \mid v\in V \}$.
We say that the digraph~$G$ is \emph{non-reconstructible\/} if there is
a digraph~$H$, not isomorphic to~$G$, such that~$\C(H)=\C(G)$, and
\emph{reconstructible\/} otherwise.
Since it will aid the proof of our main theorem, we generalize this concept as follows.
For~$t \in \mathbb{N}$, the
\emph{$t$-deck\/} of~$G$, 
is the deck of~$G$ augmented by~$t$ copies of~$\<G\>$.
By counting the number of copies of $\<G\>$ that occur in the $t$-decks of
two non-isomorphic digraphs~$G$ and~$H$,\rev{P3/35} we find that there is at
most one~$t$ for which~$G$ and~$H$ have the same~$t$-deck.

\section{Switching-stable digraphs}\label{sec:stable:graphs}

Following~\cite{BondyMercier}, we call a digraph~$D$
\emph{switching-stable\/} if~$D_v$ is isomorphic to~$D$
for all vertices~$v$.
Examples of switching-stable digraphs are shown in Figure~\ref{fig:switching:stable:con:graphs}. The figure shows a graph on one vertex, a graph on two vertices with one edge and a graph that has four vertices~$v_1,v_2,v_3,v_4$ and four edges
$v_1v_2,v_2v_3,v_3v_4,v_1v_4$.
In fact, as the following theorem shows, all switching-stable oriented graphs are obtained from these graphs by forming disjoint unions.

\begin{figure}
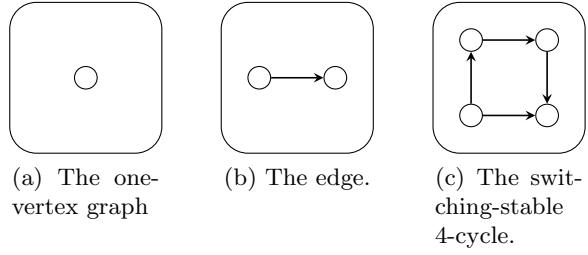


\captionsetup[subfigure]{margin=1pt}

\centering
 \subfloat[The one-vertex graph]{\BoxedGraph {1/0/0}{}}\quad \quad
 \subfloat[The edge.]{\BoxedGraph{1/0/0, 2/1/0}{ 1/2}}\quad \quad
 \subfloat[The~swit\-ching-stable 4-cycle.]{\BoxedGraph{1/0/0, 2/0/1, 3/1/0, 4/1/1}{ 1/2,2/4,4/3,1/3}\label{fig:the:4:cycle:B}}
\caption{All connected switching-stable oriented graphs.}\label{fig:switching:stable:con:graphs}
\end{figure}

\begin{theorem}\label{sstable}
An oriented graph is switching-stable if and only if each of
its components is either an isolated vertex, an isolated edge,
or the oriented 4-cycle that has a unique directed path of length 3, i.e., all components are isomorphic to one of the graphs shown in Figure~\ref{fig:switching:stable:con:graphs}.
\end{theorem}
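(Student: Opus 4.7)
For sufficiency, switching at a vertex $v$ modifies only edges within the component of $G$ containing $v$; hence $G$ is switching-stable if and only if each of its components is. It thus suffices to verify that each of the three graphs in Figure~\ref{fig:switching:stable:con:graphs} is switching-stable. The isolated vertex is trivial; the edge becomes its reverse under either switching, and the reverse is isomorphic via swapping the two endpoints; for the 4-cycle with unique directed path of length 3, one checks by direct computation at each of the four vertices that the switched graph is isomorphic to the original (switching at the source or sink is realized by a cyclic rotation of the 4-cycle, while switching at a middle vertex is realized by a reflection).

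For necessity, assume $G$ is a connected switching-stable oriented graph on $n$ vertices. For each $v \in V$ one has $G_v = G^{\sigma_v}$ for some $\sigma_v \in \Aut(\hat G)$, where $\hat G$ denotes the underlying undirected graph; by Lemma~\ref{swdiff}, $\sigma_v \neq \mathrm{id}$ whenever $n \ge 2$. The cases $n \le 4$ I would dispatch by direct enumeration of connected oriented graphs on at most four vertices and explicit stability checks: only the vertex, the edge, and the described 4-cycle arise, with no stable orientation on any connected underlying graph on three vertices.

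The bulk of the work is ruling out $n \ge 5$. The key tool is that the multiset $\{(d^-(v), d^+(v)) : v \in V\}$ is preserved by every switching: switching at $v$ swaps $v$'s own pair and shifts each out-neighbor's pair by $(-1,+1)$ and each in-neighbor's pair by $(+1,-1)$. Requiring preservation yields per-vertex identities; writing $\epsilon(u) = d^+(u) - d^-(u)$, the second-moment condition gives
\[
\sum_{w \in N^+(v)} \epsilon(w) - \sum_{w \in N^-(v)} \epsilon(w) = -d(v) \quad \text{for every } v \in V,
\]
with analogous identities coming from other isomorphism-invariant counts such as the number of transitive 2-paths $\sum_v d^+(v)d^-(v)$. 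Combining these identities forces $\Delta(\hat G) \le 2$, so $\hat G$ is a path or a cycle. For a path of length $\ge 2$, multiset invariance forces $G$ to be a directed path, but then switching at an endpoint creates a vertex of out- or in-degree $2$ absent from the original multiset, a contradiction. For a cycle, representing orientations as sign sequences in $\{+,-\}^n$ and using that switching at the $i$th vertex flips the two signs incident to $i$, a case analysis modulo rotations and reflections shows that only $n=4$ admits a stable orientation, namely the one described. The main obstacle is the derivation $\Delta(\hat G) \le 2$ from the multiset identities, which requires carefully tracking the local degree-pair distribution around any hypothetical vertex of degree $\ge 3$ to extract a numeric inconsistency.
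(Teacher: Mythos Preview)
Your approach diverges sharply from the paper's, and the central step is asserted rather than proved.

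The paper's argument for $n\ge 5$ is group-theoretic. It introduces
$\varGamma(D)=\{\gamma\in S_n : D_W=D^\gamma \text{ for some }W\subseteq V\}$
and shows $\Aut(D)\le\varGamma(D)\le\Aut(\hat G)$ with
$[\varGamma(D):\Aut(D)]=2^{n-1}$. This forces $n=2^k$ and $\Aut(\hat G)$ to
contain a Sylow $2$-subgroup of $S_n$, which pins $\hat G$ down to an
explicit iterated construction; in particular $\hat G$ is regular of degree
at least $n/2$, and a short out-degree-sequence argument then yields a
contradiction. The paper never claims, and never needs, $\Delta(\hat G)\le 2$.

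Your route is via degree-pair multiset invariants, and you explicitly flag
the derivation of $\Delta(\hat G)\le 2$ as ``the main obstacle'' without
carrying it out. This is a genuine gap, not a routine omission. The
second-moment identity you wrote is correct, but by itself it does not force
$\Delta\le 2$: take the tournament on four vertices in which one vertex
dominates a directed $3$-cycle. Its $\epsilon$-values are $(3,-1,-1,-1)$ and
your identity
$\sum_{w\in N^+(v)}\epsilon(w)-\sum_{w\in N^-(v)}\epsilon(w)=-d(v)$
holds at every vertex, yet $\Delta=3$. So strictly more than the second
moment is required; you gesture at ``the number of transitive $2$-paths''
and ``carefully tracking the local degree-pair distribution'', but you do
not say which finite list of invariants you use or prove that they close the
argument.

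Even granting $\Delta(\hat G)\le 2$, your treatment of cycles is also
incomplete. On a cycle the identity gives $\sum_w \epsilon(w)^2 = 2n$, hence
exactly $n/4$ local sources and $n/4$ local sinks, so $4\mid n$; but this
still leaves infinitely many values $n=8,12,16,\ldots$ to exclude, and ``a
case analysis modulo rotations and reflections'' is not a finite procedure.
By contrast, the paper's index bound $2^{n-1}\le\lvert\Aut(\hat G)\rvert=2n$
disposes of all cycles (and paths) with $n\ge 5$ in one line.
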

\begin{proof}
Since a digraph is switching-stable if and only if all of its components are switching-stable, it is sufficient to consider connected oriented graphs.
An easy search among the connected oriented graphs with at most 4 vertices shows that only the three stated examples are switching-stable. Thus, our task is to prove there are none that are larger.

Let~$D$ be a switching-stable oriented graph with $n>4$ vertices and
underlying undirected graph~$G$. 
Let~$S_n$ be the symmetric group on the vertex set~$V$ of~$G$.
Define
\[ \varGamma(D) = \{ \gamma\in S_n \mid D_W=D^\gamma
   \text{~for some~$W\subseteq V$} \}. \]
Since~$\varGamma(D)$ is closed under composition 
($D_X=D^\gamma$ and~$D_Y=D^\delta$ imply
$D_{X^\delta Y}=D^{\gamma\delta}$),~$\varGamma(D)$ is a group.
Note that~$\varGamma(D)$ contains the automorphism group~$\Aut(D)$.

If~$D_X=D_Y$ for~$X,Y\subseteq V$, then either~$X=Y$ or~$X=V\setminus Y$,
by Lemma~\ref{swdiff}.
Therefore, precisely~$2^{n-1}$ different labelled oriented graphs
arise by switching subsets of the vertices in~$D$.
Furthermore, switching preserves the underlying undirected graph~$G$,
so we have
\begin{equation}\label{groups}
  \Aut(D) \le \varGamma(D) \le \Aut(G),
\end{equation}
where~$\Aut(D)$ has index~$2^{n-1}$ in~$\varGamma(D)$.

The number~$2^{n}$ cannot divide~$n!$, while 
$2^{n-1}$ divides $n!$ only when $n$ itself is a power of~2.
Thus we can assume $n=2^k$ for some~$k$ and that
$\Aut(G)$ contains a Sylow 2-subgroup of~$S_n$,
say~$\varLambda$. Every Sylow 2-subgroup of~$S_n$ is an
iterated wreath product of $Z_2$ with
itself (that is, the action on the leaves of a complete
binary tree of the automorphism group of the tree).
This means that~$G$ belongs to a particularly simple class of
graphs, which we now construct.

Consider the sequence of permutation groups
$S_1=\varLambda_0,\varLambda_1,\ldots,\varLambda_k=\varLambda$,
where $\varLambda_0$ acts on a single point and, for $1\le i\le k$,
$\varLambda_i=\varLambda_{i-1}\wr Z_2$ is the standard permutation
representation of the wreath product on $2^i$ points.
That is, $\varLambda_i$ is imprimitive with two blocks of size $2^{i-1}$
that can be exchanged by an element of order 2 and whose
block-wise stabilizer is $\varLambda_{i-1}\times \varLambda_{i-1}$.
This structure of $\varLambda_k\le\Aut(G)$
implies that $G$ consists of two disjoint copies of
some graph on which $\varLambda_{k-1}$ acts, with these two
copies either joined by no edges or by all possible edges.

Applying this logic recursively we find that $G=G_k$ where
$G_0,G_1,\ldots,G_k$ is some sequence of graphs constructed as
follows.

\smallskip\noindent
(a) Graph $G_0$ is the complete graph with one vertex, $K_1$.\\
(b) For $1\le i\le k$, graph $G_i$ is formed by taking two disjoint copies
of $G_{i-1}$ and adding either no more edges or all possible edges
between the two copies.  Note that $G_i$ has $2^i$ vertices and
$\varLambda_i\le\Aut(G_i)$.

\smallskip

Since we are assuming $G$ to be connected, in the final step when
$G_k$ is made from two copies of $G_{k-1}$ we must completely
join the two copies.  Thus $G$ is a regular graph of degree
$d\ge \nfrac12 n$ whose complement is disconnected.

\smallskip
By switching about neighbours of a vertex~$v$ of~$D$, we can
achieve that~$v$ has any out-degree in~$\{0,1,\ldots,d\}$ we choose. Since~$D$ is switching-stable,
this implies that each of the out-degrees~$0,1,\ldots,d$ occurs in~$D$.
Let~$v_0$ be a vertex of out-degree 0.  Switching about~$v_0$
changes the out-degree of~$v_0$ to~$d$, and decreases
the out-degrees of its neighbours by~1.  The only way this
can fail to change the degree sequence of~$D$ is if the
neighbours of~$v_0$ have the distinct out-degrees~$1,2,\ldots,d$.
Now suppose there is a second vertex~$v'_0$ of out-degree~0.
Since~$d\ge \nfrac12 n$,~$v_0$ and~$v'_0$ have at least one common
neighbour, but then switching about~$v'_0$ changes the set
of out-degrees of the neighbours of~$v_0$, which we just proved
is impossible.  So~$v_0$ is the only vertex of out-degree~0.
Similarly,~$D$ has exactly one vertex~$v_d$ of out-degree~$d$,
and its neighbours have out-degrees~$0,1,\ldots,d-1$.

\medskip

Now we divide the argument into two cases depending on whether
or not~$v_0$ and~$v_d$ lie in the same component of the
complement~$\overline G$.
Recall that there are at least two such components.

First assume that~$v_0$ and~$v_d$ lie in different components
of~$\overline G$.
Then every vertex of~$D$ is a neighbour of either~$v_0$ or~$v_d$.
Since~$\Aut(D)$ fixes~$v_0$ and~$v_d$, and their respective
neighbours have unique out-degrees as we showed above, we
have~$|\Aut(D)|=1$.
By~\eqref{groups}, this implies~$\varGamma(D)=\varLambda$.
As explained above, $\varLambda$ has a block-system consisting of two blocks 
$W_0,W_1$ of the same size, and all $\nfrac14 n^2$ edges between $W_0$ and $W_1$ 
are present.
Therefore, for any vertex~$v$,
$D_v=D^{\gamma(v)}$ where~$\gamma(v)\in\varLambda$ satisfies
$\{W_0^{\gamma(v)},W_1^{\gamma(v)}\} = \{W_0,W_1\}$.\rev{P4/48}
Moreover, if~$e$ is the number of edges of $D$
directed from~$W_0$ to~$W_1$, then~$\nfrac14 n^2-e$ is the number of edges
directed the other way. Switching about a vertex must preserve the invariant $\{e,\nfrac14 n^2-e\}$ consisting of the counts of edges directed from one side of the partition to the other.
If~$v\in W_0$ has~$x$ out-neighbours in~$W_1$, after switching about $v$ there are $e+\nfrac12 n-2x$ edges from $W_0$ to~$W_1$, so we must have $e+\nfrac12 n-2x\in\{e,\nfrac14 n^2-e\}$.
Similarly, if~$w$ in~$W_1$ has~$x$ out-neighbours in~$W_0$, after switching about $w$ there
are $e-\nfrac12 n+2x$ edges from $W_0$ to $W_1$, so we must have $e-\nfrac12 n+2x\in\{e,\nfrac14 n^2-e\}$.
These constraints only allow $x$ to have the three values $\nfrac14 n$,
$\nfrac14 n+e-\nfrac18 n^2$ and $\nfrac14 n-e+\nfrac18 n^2$.
However for the vertex~$v_0$, any value for~$x$ from 0 to~$\nfrac12 n$ can be achieved by switching about its neighbours.
Since~$n\ge 8$ (recall that $n$ is a power of~2), this is a contradiction.

Finally, assume that~$v_0$ and~$v_d$ lie in the same component
of~$\overline G$, say the component induced by vertex set~$X$.
If $\overline G$ has exactly two components, then $\Aut(G)$ preserves
them as a blocks and the counting argument we used in the previous
paragraph applies. If the number of components is greater than 2, it
is still a power of 2 (by the construction of~$G$) and $\Aut(G)$
preserves the partition of $V$ into those components.
Switching about a vertex~$w$ not in~$X$ cannot create a 
vertex of out-degree 0 or~$d$ outside~$X$, since all vertices
not in~$X$ are adjacent to~$v_0$ and to~$v_d$.
Therefore, the isomorphism that maps $D$ to $D_v$ preserves
the set $V\setminus X$, and so the subgraph of~$D$ induced
by~$V\setminus X$ is switching-stable. However, it is
connected and has an order which is not a power of 2, which
contradicts what we proved above.
\end{proof}

\section{Switching-stable sets of digraphs}\label{sec:stable:sets}

A set~$M$ of digraphs is called \emph{switching-stable} if every switching of a digraph\rev{Ref 2} in~$M$ is isomorphic to a digraph in~$M$. Examples of switching-stable sets are sets that only contain switching-stable digraphs and sets that contain all orientations of an undirected graph. We now argue that if a small switching-stable set\rev{P5/20} consists of orientations of a large connected graph, then the automorphism group of the graph must be large.

\begin{lemma}
Let~$M$ be a set of orientations of a connected graph~$G$ on~$n$ vertices.
If~$M$ is switching-stable then~$2^{n-1} \leq |M|\, |\Aut(G)|$. 
\end{lemma}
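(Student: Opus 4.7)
The plan is to fix any $D\in M$, consider the $2^{n-1}$ labelled digraphs that arise by switching subsets of $V$ starting from $D$, and show that each isomorphism class $\<H\>$ with $H\in M$ can absorb at most $|\Aut(G)|$ of them. Summing then yields $2^{n-1}\le |M|\,|\Aut(G)|$.

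First I would justify that every $D_W$ (for $W\subseteq V$) is isomorphic to some element of $M$. The definition of switching-stable gives this only for switchings about a single vertex, but applying it inductively to $D$, $D_{w_1}$, $D_{w_1w_2}$, $\ldots$, we see that $D_{w_1\cdots w_k}$ is isomorphic to an element of $M$ for every multiset of vertices, and hence for every subset. Next, Lemma~\ref{swdiff}, together with the connectedness of $G$, tells us that as $W$ ranges over subsets of $V$ the labelled digraphs $D_W$ realize exactly $2^{n-1}$ distinct graphs (since $D_W=D_{W'}$ forces $W'\in\{W,V\setminus W\}$). All of these are orientations of $G$, since switching preserves the underlying undirected graph.

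The key counting step is to bound, for a fixed $H\in M$, the number of labelled orientations of $G$ that are isomorphic to $H$. A labelled orientation of $G$ is isomorphic to $H$ precisely when it has the form $H^\gamma$ for some $\gamma\in S_n$ with $G^\gamma=G$, i.e. $\gamma\in\Aut(G)$. Two such permutations $\gamma,\gamma'\in\Aut(G)$ produce the same labelled graph $H^\gamma=H^{\gamma'}$ if and only if $\gamma{\gamma'}^{-1}\in\Aut(H)$, so the number of labelled orientations of $G$ isomorphic to $H$ equals $|\Aut(G)|/|\Aut(H)|$, which is at most $|\Aut(G)|$.

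Combining the two counts, the $2^{n-1}$ distinct labelled orientations $\{D_W\mid W\subseteq V\}$ (modulo the identification $W\sim V\setminus W$) distribute among the isomorphism classes $\<H\>$ for $H\in M$, each class receiving at most $|\Aut(G)|$ of them, so $2^{n-1}\le |M|\,|\Aut(G)|$. The main obstacle to guard against is simply the distinction between ``isomorphic to an orientation of $G$'' and ``labelled orientation of $G$'' in the final count, which is precisely what forces the factor $|\Aut(G)|$ rather than $n!$; everything else is an immediate consequence of Lemma~\ref{swdiff} and the closure-under-switching definition of $M$.
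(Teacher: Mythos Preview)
Your proof is correct and follows essentially the same counting argument as the paper. The only cosmetic difference is that the paper applies the pigeonhole principle to locate a single $D'\in M$ hit by at least $2^{n-1}/|M|$ of the switchings $D_W$ and then observes that the corresponding isomorphisms $\delta_W\in\Aut(G)$ are distinct (again via Lemma~\ref{swdiff}), whereas you sum the same inequality over all classes at once; the underlying idea and the ingredients used are identical.
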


\begin{proof}
Suppose $m=|M|$. Since $M$ is switching-stable, by the pigeonhole principle there must be digraphs~$D,D'\in M$ (possibly equal) such that $D_W$ is isomorphic to~$D'$ for at least~$2^{n} / m$ sets~$W \subseteq V$. Recalling that~$D_W = D_{V\setminus W}$, it follows that for any fixed $v\in V$ there are~$2^{n-1} / m$ sets~$W$ with~$v\in W$ such that~$D_W$ is isomorphic to~$D'$. Thus for each such~$W$ there is an automorphism~$\delta_W$ of~$G$ such that~$D_W = (D')^{\delta_W}$. 
If for two such sets~$W$ and~$W'$ we have~$\delta_W = \delta_{W'}$ then~$D_W =D_{W'}$,
which cannot happen by Lemma~\ref{swdiff}. Thus all automorphisms~$\delta_W$ are distinct, which proves the lemma.
\end{proof}

\begin{corollary}\label{cor:switching:stable:sets:max:deg:2}
Let~$M$ be a set of orientations of a connected graph~$G$ on~$n$ vertices of maximum degree at most~2.
If~$M$ is switching-stable, then~$2^{n-1} \leq 2 n\,|M|$.
\end{corollary}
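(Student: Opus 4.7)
The plan is to apply the preceding lemma directly, combined with an elementary bound on the automorphism group of a connected graph of maximum degree at most $2$. The preceding lemma gives $2^{n-1} \le |M|\,|\Aut(G)|$, so it suffices to show $|\Aut(G)| \le 2n$.

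Since $G$ is connected and has maximum degree at most $2$, $G$ is either a single vertex, a single edge, a path $P_n$, or a cycle $C_n$. I would simply check each case: the single vertex has trivial automorphism group; the single edge has an automorphism group of order $2$; a path $P_n$ on $n \ge 2$ vertices has automorphism group $\mathbb{Z}_2$ (generated by reversal), so $|\Aut(P_n)| = 2$; and a cycle $C_n$ on $n \ge 3$ vertices has the dihedral automorphism group of order $2n$. In every case $|\Aut(G)| \le 2n$.

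Plugging this into the bound from the previous lemma gives $2^{n-1} \le |M|\,|\Aut(G)| \le 2n\,|M|$, as required.

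There is no real obstacle here; the corollary is essentially a one-line consequence of the lemma once one observes that connected graphs of maximum degree at most $2$ are paths or cycles (plus the two degenerate cases), for which the automorphism group has size at most $2n$. The only thing worth being explicit about is the enumeration of the possible structures of $G$ so that the bound $|\Aut(G)| \le 2n$ is justified.
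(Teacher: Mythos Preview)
Your proposal is correct and follows exactly the same approach as the paper: apply the preceding lemma and use the bound $|\Aut(G)|\le 2n$ for connected graphs of maximum degree at most~2. The paper's proof is a single sentence to this effect; your version merely spells out the case analysis (vertex, path, cycle) that justifies the automorphism bound.
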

\begin{proof}
This follows from the previous lemma since every connected~$n$-vertex graph of maximum degree at most~2 has an automorphism group of size at most~$2 n$.
\end{proof}

\section{Paths}\label{sec:paths}

In this section we consider only oriented graphs with an underlying graph that is a path. We characterize all pairs of non-isomorphic oriented paths that have the same~$t$-deck for some~$t\in \mathbb{N}$. The reconstructibility of paths on more than four vertices from their decks was shown by Mercier~\cite{Mercier:thesis}. For our purposes, we need to extend this characterization to $t$-decks. Consequently we cannot assume that the number of vertices of the paths are even and we require an alternative proof. It turns out that the pairs we want to characterize can only consist of oriented paths with three or four vertices. 
\begin{theorem}
Let~$t$ be a non-negative integer. If~$P_1$ and~$P_2$ are two oriented paths on at least 5 vertices that have the same~$t$-deck, then~$P_1$ and~$P_2$ are isomorphic.
\end{theorem}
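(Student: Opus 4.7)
My plan is to encode each oriented path on $n$ vertices by its bit string $\sigma = s_1 s_2 \cdots s_{n-1} \in \{0,1\}^{n-1}$, with $s_i = 1$ iff the edge between consecutive path vertices $v_i$ and $v_{i+1}$ is directed from $v_i$ to $v_{i+1}$. Two bit strings represent isomorphic oriented paths precisely when they coincide or are related by the reverse-complement involution $\sigma \mapsto \bar{\sigma}^R$, where $(\bar\sigma^R)_i = \bar s_{n-i}$. In this language, switching at $v_1$ (resp.\ $v_n$) flips the single bit $s_1$ (resp.\ $s_{n-1}$), while switching at $v_j$ for $2 \le j \le n-1$ flips the pair $(s_{j-1}, s_j)$.

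The first step is to reconstruct the \emph{endpoint type} of $P$ from its deck. Call $P$ of type $A$, $B$, or $C$ according as $(s_1,s_{n-1})=(1,0)$ (both endpoints are sources), $(0,1)$ (both are sinks), or one of $(0,0),(1,1)$ (mixed). Only the four switches at $v_1,v_2,v_{n-1},v_n$ can change an endpoint bit. A direct enumeration shows that the triple (\#type-$A$, \#type-$B$, \#type-$C$ entries) of $\mathcal{C}(P)$ is $(n-4,0,4)$, $(0,n-4,4)$, or $(2,2,n-4)$ according as $P$ is of type $A$, $B$, or $C$. For $n\ge 5$ these three triples are pairwise distinct, so the endpoint type is a deck-invariant; in particular $P_1$ and $P_2$ share it. (The cases $n\in\{3,4\}$ really do collapse here, which is consistent with the theorem's hypothesis.)

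Next I would invoke the Bondy--Mercier reconstruction theorem for 3-vertex induced subgraphs (valid because $n\ge 5$): the counts of induced V-sources and V-sinks in $P$ are deck-invariants, so the number of runs of $\sigma$ and the parities of its boundary bits are determined. The main body of the argument is then a case analysis by endpoint type. The four ``boundary switches'' (at $v_1,v_2,v_{n-1},v_n$) come in two pairs sharing endpoint patterns; within each pair, the two entries differ only in whether the second-from-end bit is flipped, so comparing their isomorphism classes pins down $s_2$ and $s_{n-2}$. Armed with $s_1, s_2, s_{n-2}, s_{n-1}$ and the multiset of $n-4$ remaining (``interior'') switches---each of which flips only one specific adjacent pair of interior bits---one then propagates the reconstruction inwards bit by bit, ultimately recovering $\sigma$ up to the reverse-complement involution, which is exactly isomorphism of oriented paths.

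The main obstacle is the bookkeeping in this inward propagation: distinct switches can collapse to the same $rc$-equivalence class whenever $\sigma$ carries a palindromic or near-palindromic reverse-complement symmetry, and at such exceptional paths matching each deck entry to its originating vertex is delicate. It is precisely in these residual symmetric configurations that the $t$-copies of $\langle P\rangle$ embedded in the $t$-deck become useful, since they let us single out $\langle P\rangle$ among the high-multiplicity elements and hence distinguish the ambiguous entries. This is why the statement is phrased for arbitrary $t\ge 0$ rather than only $t=0$, and together with the endpoint-type separation step it explains the hypothesis $n\ge 5$.
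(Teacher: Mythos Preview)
Your setup (the bit-string encoding and the reverse-complement involution) is correct, and your first step---recovering the endpoint type $A/B/C$ from the multiset of types appearing in the $t$-deck---is valid and essentially matches the paper's opening move. After that, however, the argument has genuine gaps.

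First, the Bondy--Mercier subgraph-counting theorem is stated for the ordinary switching deck, not for $t$-decks. Since you do not know which $t$ cards are the added copies of $\langle P\rangle$, you cannot simply strip them off and recover the ordinary deck, so invoking that result here requires a justification you have not supplied.

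Second, your pairing step already breaks in types $A$ and $B$: when $(s_1,s_{n-1})=(1,0)$, all four boundary switches land in type $C$, and since the patterns $(0,0)$ and $(1,1)$ are swapped by reverse-complement, ``two pairs sharing endpoint patterns'' does not separate $\{G_{v_1},G_{v_2}\}$ from $\{G_{v_{n-1}},G_{v_n}\}$ at the level of isomorphism classes.

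Third, and most seriously, the heart of your argument---the ``inward propagation'' of bits---is only sketched, and you yourself flag that it runs into trouble at reverse-complement-symmetric configurations. Your proposed remedy, that the $t$ extra copies ``let us single out $\langle P\rangle$ among the high-multiplicity elements'', is backwards: $\langle P\rangle$ is exactly what is unknown, so you cannot recognise which deck entries are the added copies. The extra copies make the problem harder (or at least no easier); the theorem is phrased for all $t\ge 0$ because the paper later \emph{needs} reconstructibility from $t$-decks when analysing disconnected graphs, not because those copies assist the reconstruction.

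The paper avoids all of this by a shorter route. After isolating the four outer-vertex switches (your first step), it separates the two end-vertex switches from the two penultimate-vertex switches using the parity of the penultimate-edge directions---the same parity trick one level in. From any end-vertex-switch card it then reads off the isomorphism type of the subpath $P'$ obtained by deleting both endpoints (those switches do not touch $s_2,\ldots,s_{n-2}$), and from any interior card it reads off the endpoint count of $P$. If that count is $0$ or $2$, $P$ is already determined by $\langle P'\rangle$; if it is $1$, the only two candidates are ${\to}P'{\to}$ and ${\leftarrow}P'{\leftarrow}$, and comparing their penultimate-vertex switches shows these have the same $t$-deck only when $P'$ equals its own reverse, in which case the two candidates are isomorphic. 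No bit-by-bit inward propagation is needed.
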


\begin{proof}
Let~$t$ be a non-negative integer and let~$P$ be an oriented path on at least 5 vertices. We first argue that in the~$t$-deck of~$P$ we can identify the two cards corresponding to switchings about the end vertices and that we can identify the two cards corresponding to switchings about the penultimate vertices.
To show this, we first identify the set~$M$ of cards corresponding to switchings of~$P$
about one of the outer four vertices. We divide the cards into two groups
according to the parity of the number of end edges pointing
outwards. 
If there is a card with both end edges pointing outwards and a card with
both end edges pointing inwards then~$M$ is the group of parity 0. Otherwise M is the group of parity 1.

By a similar argument, considering only cards in~$M$ and the parity of the number of the penultimate edges pointing
outwards, we can distinguish the cards in~$M$ according to whether an end vertex or a neighbour of an end vertex was switched.
This implies that~$P'$, the induced sub-path of~$P$ obtained by deleting the end vertices, is reconstructible.

Suppose~$P$ is a non-reconstructible oriented path on at least 5 vertices. By what we just observed, we can identify a card in the deck of~$P$ in which no leaf and no vertex adjacent to a leaf has been switched. Thus, we can reconstruct the number of end edges of~$P$ pointing outwards. Since we can also identify a card in which an end vertex has been switched, this implies that among the leafs of~$P$ there is one vertex with out-degree 0 and one with in-degree~0. 

Since the oriented path~$P'$ obtained by deleting the end vertices has been determined, two possible reconstructions remain (${\rightarrow} P'{\rightarrow}$ and~${\leftarrow} P' {\leftarrow}$). By considering the switchings about the penultimate vertices, we see that these only have the same deck if $P'$ is the same as its reverse, in which case they are isomorphic.
\end{proof}

\begin{figure}
\centering
\subfloat[Paths that have the same 1-decks.]{
\begin{tikzpicture}
\node[graphcollection] (rnum)  at (0,0) {%
\BoxedGraph {1/0/0, 2/.75/0, 3/1.5/0}{1/2,2/3}\,\,
\BoxedGraph {1/0/0, 2/.75/0, 3/1.5/0}{1/2,3/2}\,\,
\BoxedGraph {1/0/0, 2/.75/0, 3/1.5/0}{2/1,2/3}
};%
\end{tikzpicture}%
}

\subfloat[Paths that have the same decks.]{
\begin{tikzpicture}
\node[graphcollection] (rnum)  at (0,0) {%
\BoxedGraph {1/0/0, 2/.75/0, 3/1.5/0, 4/2.25/0}{1/2,2/3,3/4}\,\,
\BoxedGraph {1/0/0, 2/.75/0, 3/1.5/0, 4/2.25/0}{1/2,3/2,3/4}
};%
\end{tikzpicture}%
}\quad \quad 
\subfloat[Paths that have the same decks.]{
\begin{tikzpicture}
\node[graphcollection] (rnum)  at (0,0) {%
\BoxedGraph {1/0/0, 2/.75/0, 3/1.5/0, 4/2.25/0}{1/2,2/3,4/3}\,\,
\BoxedGraph {1/0/0, 2/.75/0, 3/1.5/0, 4/2.25/0}{2/1,3/2,3/4}
};%
\end{tikzpicture}%
}
\caption{The three families of paths which have the same~$t$-decks. The three paths 
in (a) have the same 1-decks, the other two pairs of graphs have the same decks.}\label{fig:paths}
\end{figure}
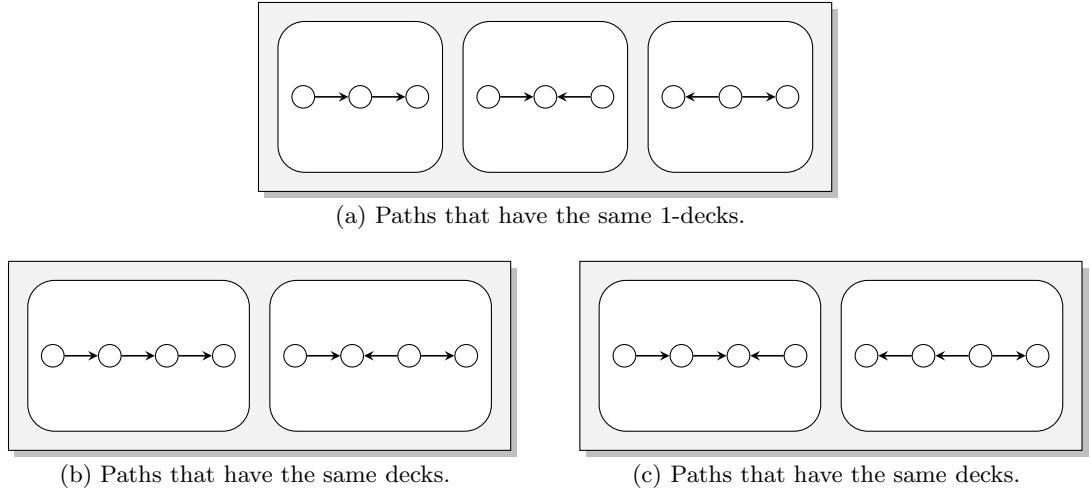

The theorem says that for our purposes we only need to consider oriented paths on at most 4 vertices. Figure~\ref{fig:paths} depicts all families of non-isomorphic oriented paths which have the same~$t$-deck for some~$t\in \mathbb{N}$.

\section{Cycles}\label{sec:cycles}

In this section we consider only graphs whose underlying undirected graph is a cycle. We determine all pairs of non-isomorphic oriented graphs of this type which have the same~$t$-deck for some~$t\in \mathbb{N}$.
The automorphism group of an undirected cycle on~$n$ vertices  is the dihedral group~$D_n$, which consists of~$n$ reflections and~$n$ rotations (the identity is a
rotation).

\begin{lemma}\label{extrot}
  Suppose that for~$n\ge 30$, there are non-isomorphic oriented cycles~$G$ and~$H$ with the same~$t$-deck for some~$t$.
  Then there is a set~$W\subseteq V$ of size 2 or 4 such that~$G_W=G^\gamma$ for some rotation~$\gamma$ of order at least~5.
\end{lemma}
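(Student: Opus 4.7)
The plan is to use the equality of the $t$-decks of $G$ and $H$ to produce many elements of $\Gamma(G)$ realized by switchings of size at most~$4$, and then to run a counting argument in the dihedral group $D_n := \Aut(C_n)$ in order to force one such element to be a rotation of order at least~$5$ arising from a switching of size $2$ or~$4$. First, I would extract pairing data from the $t$-deck equality: for each vertex $v\in V$ outside a small exceptional set there is a companion $w_v\in V$ and an element $\sigma_v\in D_n$ with $(H_{w_v})^{\sigma_v}=G_v$, equivalently $H^{\sigma_v}=G_{W_v}$ where $W_v:=\{v,w_v^{\sigma_v}\}$. Since $G\not\cong H$, we have $|W_v|\in\{1,2\}$, with $|W_v|=1$ occurring only for the at most $t$ vertices paired with the extra copies of $\<H\>$ in the $t$-deck. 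For any two paired $v,v'$, the preliminary identity $(G_W)^\delta=(G^\delta)_{W^\delta}$ yields $G^\gamma=G_{W^*}$, where $\gamma:=\sigma_{v'}^{-1}\sigma_v\in\Gamma(G)$ and $W^*:=W_v\triangle W_{v'}^\gamma$ has size in $\{0,2,4\}$.

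For the counting step, Lemma~\ref{swdiff} combined with $v\in W_v$ shows that if $\sigma_v=\sigma_{v'}$ for distinct paired $v,v'$, then $W_v=W_{v'}$ (the possibility $W_v=V\setminus W_{v'}$ is excluded because $|W_v|\le 2\neq n-2$ for $n\ge 30$), and hence $W_v=\{v,v'\}$; therefore $v\mapsto\sigma_v$ is at most $2$-to-$1$ and even injective on the $|W_v|=1$ part. Writing $A:=\{\sigma_v\}\subseteq D_n$, this gives $|A|\ge (n-t)/2+t=(n+t)/2\ge 15$. Now suppose for contradiction that no $\gamma_{v,v'}$ is a rotation of order $\ge 5$ with $|W^*|\in\{2,4\}$. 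Then every rotation appearing in $A^{-1}A$ is either short (of order $\le 4$, of which there are at most $6$ in $D_n$) or lies in $\Aut(G)$ (which is exactly the condition $|W^*|=0$). Splitting $A=A_r\sqcup A_s$ into rotations and reflections, both $A_r^{-1}A_r$ and $A_s^{-1}A_s$ are sets of rotations subject to this restriction, so by the sumset inequality $|X^{-1}X|\ge |X|$ we get $|A_r|,|A_s|\le 6+|\Aut(G)\cap R|$, where $R$ denotes the rotation subgroup of $D_n$. Hence $|A|\le 12+2\,|\Aut(G)\cap R|$; when $\Aut(G)$ contains no rotation of order $\ge 5$, the right-hand side is at most $12$, contradicting $|A|\ge 15$.

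The main obstacle is the case when $\Aut(G)$ itself contains a rotation of order $\ge 5$, i.e.\ when $G$ is periodic with period at most $n/5$: here the bound $|A|\le 12+2\,|\Aut(G)\cap R|$ loosens and no longer immediately contradicts $|A|\ge 15$. The hard part of the proof will be to show that even in this periodic regime the assumption $G\not\cong H$ together with the deck equality still forces a rotation of order $\ge 5$ lying outside $\Aut(G)$ to be realized by a switching of size $2$ or $4$. I expect this will require directly analyzing the sequence of edge orientations of $G$, using that periodicity of $G$ must propagate (up to small corrections) to $H$ via the deck pairing, and then refining the sumset count by separating good rotations that lie in $\Aut(G)$ from those that do not — closing the estimate with the slack afforded by $n\ge 30$.
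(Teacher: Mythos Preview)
Your overall strategy---extract many dihedral elements from the $t$-deck equality and then force a long rotation by counting---is sound, and in spirit close to the paper's.  But the proposal has two genuine gaps, one of which you flag yourself and one you do not.

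\textbf{The parity slip.}  You assert $|W^*|\in\{0,2,4\}$ for every pair $v,v'$.  This is only true when $|W_v|\equiv|W_{v'}|\pmod 2$.  When $t\ge 1$ you have vertices with $|W_v|=1$ (the ones paired with the extra copies of $\<H\>$) alongside vertices with $|W_{v'}|=2$, and products of the two types yield $|W^*|\in\{1,3\}$.  For such a product, $\gamma=\sigma_{v'}^{-1}\sigma_v$ can perfectly well be a rotation of order $\ge 5$ while $|W^*|\in\{1,3\}$, so it is neither ``short'' nor in $\Aut(G)$ nor a witness to the lemma.  Your dichotomy therefore breaks on exactly the products you need in your sumset bound for $A_r^{-1}A_r$ and $A_s^{-1}A_s$, since $A_r$ and $A_s$ mix the two types.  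One can try to repair this by restricting to the $|W_v|=2$ part, but then the lower bound on $|A|$ drops to $(n-t)/2$, which is no longer large enough once $t$ is large.

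\textbf{The periodic case.}  You correctly identify that your sumset bound $|A|\le 12+2\,|\Aut(G)\cap R|$ is vacuous when $\Aut(G)$ contains a rotation of order $\ge 5$, and you leave this case to a vague plan involving ``analyzing the sequence of edge orientations'' and ``refining the sumset count''.  This is the main gap, and the paper's proof shows that it is not a separate hard case at all---rather, the whole difficulty disappears if one counts differently.  The paper first produces (by two arguments, one for $t\le 4$ and one for $t\ge 5$) eight distinct size-$2$ sets $W$ with $G_W=G^{\gamma(W)}$ for eight distinct nontrivial $\gamma(W)\in D_n$.  When two of these are combined via $G_{W\,U^{\gamma(W)}}=G^{\gamma(U)\gamma(W)}$, the only way to get $|W^*|=0$ is $U^{\gamma(W)}=W$, i.e.\ $U=W^{\gamma(W)^{-1}}$; so for each fixed $W$ at most \emph{one} $U$ is ``unacceptable''.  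This bounds the degenerate products purely combinatorially, with no reference to $|\Aut(G)|$, and a short case analysis on whether at least four of the eight $\gamma(W)$ are rotations or at least five are reflections then produces an acceptable product of order $\ge 5$.  The key idea you are missing is to control the $|W^*|=0$ collapses by the pigeonhole on the \emph{sets} $W$ rather than through the size of $\Aut(G)$.
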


\begin{proof}
For some labelling of~$G$ and~$H$,
the common~$t$-deck of~$G$ and~$H$ can be indexed as
$\{\<J(i)\> : 1\le i\le n+t\}$, so that 
$J(i)=H$ for~$i=1,\ldots,t$,~$J(i)=G$ for~$i=n{+}1,\ldots,n{+}t$,
$\<G_v\>=\<J(v)\>$ for~$i=1,\ldots,n$, and
$\<H_v\>=\<J(v{+}t)\>$ for~$i=1,\ldots,n$.

We will first find 8 sets~$W\subseteq V$ of size~2
such that~$G_W=G^{\gamma(W)}$, where the permutations
$\gamma(W)$ are nontrivial and distinct.  We do this in two
cases, depending on whether~$t\le 4$.

First suppose that~$t\le 4$.
For each~$v=t{+}1,\ldots,n$, there is some~$w$ such that
$G_{vw}\in\<H\>$.  We know that~$w\ne v$ since~$H$ is not
isomorphic to~$G$.
Then there are at least~$n-t-2$ vertices~$x$ of~$G_{vw}$ other than
$v$ and~$w$, such that for some~$y\ne x$,~$G_{vwxy}\in\<G\>$.
By Lemma~\ref{swdiff},~$G_{vwxy}=G^{\gamma(v,x)}$ for some
nontrivial~$\gamma(v,x)\in D_n$.
We had~$(n-t)(n-t-2)$ choices of~$(v,x)$, but the permutations
obtained might not be all different.
By Lemma~\ref{swdiff},~$\gamma(v,x)$ in fact depends injectively
on which elements of the multiset~$\{v,w,x,y\}$ occur an
odd number of times.  Any such multiset with 4 distinct elements
can be obtained from at most 8 choices of~$(v,x)$.  The other
possibility is that~$y=v$ or~$y=w$, so that exactly two elements,
say~$a,b$, of the multiset~$\{v,w,x,y\}$ occur an odd number of times.  Any
such set~$\{a,b\}$ can occur at most once for each value of~$v$,
that is, at most~$n-t$ times.  Finally, there are~$2n-1$ nontrivial
elements in~$D_n$.  If~$k$ is the number of choices~$(v,x)$ for
which we obtain a 2-set, we must have
$8(2n-k-1)+(n-t)k\ge (n-t)(n-t-2)$, which implies~$k\ge 9$ when
$t\le 4$ and~$n\ge 30$.

Next suppose that~$t\ge 5$.  For~$v=1,\ldots,5$,~$G_v$ is isomorphic
to~$H$, then for each such~$v$, there are five values of~$w$, corresponding
to the vertices of~$H$ labelled~$n{+}1,\ldots,n{+}5$, for which~$G_{vw}$
is isomorphic to~$G$.  Discounting the possibility~$v=w$, and noting that
$G_{vw}=G_{wv}$, we find~$10$ sets~$\{v,w\}$ such that 
$G_{vw}=G^{\delta(v,w)}$ for nontrivial distinct~$\delta(v,w)\in D_n$.

The proof so far shows that we have a set $\mathcal{W}$ of 8
distinct sets of size 2 such that for each $W\in\mathcal{W}$,
$G_W=G^{\gamma(W)}$, where the permutations $\gamma(W)\in D_n$ are
nontrivial and distinct.

If, for some $W\in\mathcal{W}$, $\gamma(W)$ is a rotation of
order at least 5, we are done, so assume that is not the case.
We will find such a rotation by combining two sets $U,W$ using
the rule $G_{U^{\gamma(W)}W}=G^{\gamma(U)\gamma(W)}$. However we
need to avoid the case where $U^{\gamma(W)}=W$, since we want the
symmetric difference of $U^{\gamma(W)}$ and $W$ to have size~2
or~4. This means that for each $W\in\mathcal{W}$, there is at
most one $U\in\mathcal{W}$, namely $W^{-\gamma(W)}$, excluded
for this reason. We will call the product $\gamma(U)\gamma(W)$
\textit{acceptable} if $U\ne W^{-\gamma(W)}$.

First suppose that $\mathcal{W}$ contains four rotations. If there
is one, $\gamma_1$, of order~2 or~4, and two $\gamma_2,\gamma_3$, of
order~3, then $\gamma_2\gamma_1$ and $\gamma_3\gamma_1$ have order~6
or~12 and at least one of them is acceptable. The same argument
holds if $\gamma_1$ has order~3 and $\gamma_2,\gamma_3$ have order~2
or~4.

If $\mathcal{W}$ does not contain four rotations, it has at
least 5 reflections~$\sigma_1,\ldots,\sigma_5$. 
The product $\sigma_1\sigma_1$ is trivial and therefore not acceptable. Thus, the four products $\gamma_2 = \sigma_2\sigma_1, \ldots,\gamma_5 =\sigma_5\sigma_1$ are acceptable and in particular nontrivial.
If one of the nontrivial rotations~$\gamma_2,\ldots,\gamma_5$ has 
order at least 5, we are done, so assume that is not the case. By the same argument as before there must be indices $i,j,k \in\{2,\ldots,5\}$ with~$i\neq j$ such that~$\gamma_{i}\gamma_{k}^{-1}$ and $\gamma_j\gamma_k^{-1}$ have order~6 or~12. Noting that~$\gamma_{i}\gamma_{k}^{-1}= \sigma_i \sigma_1 \sigma_1 \sigma_{k} =  \sigma_i \sigma_{k}$ and~$\gamma_{j}\gamma_{k}^{-1}= \sigma_j \sigma_1 \sigma_1 \sigma_{k} = \sigma_j \sigma_{k}$ we conclude that one of the two products must be acceptable and has order at least 5.

One of these two possibilities (4 rotations or 5 reflections) must happen if we have 8 distinct
nontrivial elements of~$D_n$ in~$\mathcal{W}$.  
This shows that there is a set~$W\subseteq V$ of size 2 or 4 such that~$G_W=G^\gamma$ for some rotation~$\gamma$ of order at least~5.
\end{proof}

Let~$\delta$ be a rotation.
We define~$\swappingpoints(G,\delta)$ to be the set~$W$ that satisfies~$G_W = G^{\delta}$ and~$|W| < n/2$, if it exists.
Note that the set~$W$ is unique if it exists. Let~$\dist(G,\delta)$ be the set of pairwise distances (lengths of shortest paths on the undirected cycle) between the elements of~$\swappingpoints(G,\delta)$.
Note that~$|\swappingpoints(G,\delta)| = |\swappingpoints(G,\delta^{-1})|$
and~$\dist(G,\delta)= \dist(G,\delta^{-1})$, since $W(G,\delta^{-1})=W^{\delta^{-1}}$.

\begin{lemma}\label{lem:know:size:w}
Let~$G$ be an oriented cycle and let $\delta$ be a nontrivial rotation
such that $G_W = G^{\delta}$ for some $W\subseteq V$ with $n >2|W|+8$.
Then $\swappingpoints(G_v,\delta)$ is well defined for each $v\in V$. Furthermore,
for any oriented cycle~$H$ that has an extended deck which is also an extended deck of~$G$, the set~$\swappingpoints(H,\delta)$ is well defined and~$|\swappingpoints(H,\delta)| = \max\big\{|W(G_v,\delta)|-2 \mid  v\in V\big\} = |W|$.
\end{lemma}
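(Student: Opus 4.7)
The plan is to run Lemma~\ref{lem:switch:one:vertex:observation} both forwards on $G$ and backwards on $H$, and then reconcile the two analyses via the multiset equality of $t$-decks.

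First I would apply Lemma~\ref{lem:switch:one:vertex:observation} to $G_W=G^\delta$: since the nontrivial rotation $\delta$ has no fixed vertex on the cycle, $v\ne v^\delta$ for all $v$, and the conclusion reduces to $(G_v)_{W\triangle\{v,v^\delta\}}=(G_v)^\delta$. The size of this symmetric difference is at most $|W|+2<n/2$ by the hypothesis $n>2|W|+8$, so Lemma~\ref{swdiff} identifies $W(G_v,\delta)=W\triangle\{v,v^\delta\}$. Since the maximum $|W|+2$ is attained whenever $v,v^\delta\notin W$ (possible as $2|W|<n$), we obtain $\max_v|W(G_v,\delta)|=|W|+2$. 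A short computation pushing $J_{W(J,\delta)}=J^\delta$ through an isomorphism $\psi\in D_n$ to $J'=J^\psi$ shows that $\psi^{-1}\delta\psi\in\{\delta,\delta^{-1}\}$; combined with the remark $|W(J,\delta)|=|W(J,\delta^{-1})|$ this makes $|W(J,\delta)|$ an invariant of $\langle J\rangle$.

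Using this invariance together with the $t$-deck equality, every card of $H$ is isomorphic either to a card of $G$ or to $G$ itself, so $|W(H_v,\delta)|\le|W|+2$ for each $v$. Applying Lemma~\ref{lem:switch:one:vertex:observation} in reverse to $(H_v)_{W(H_v,\delta)}=(H_v)^\delta$ then yields $H_{W(H_v,\delta)\triangle\{v,v^\delta\}}=H^\delta$ with size at most $|W|+4<n/2$, so $W(H,\delta)$ is well defined with $|W(H,\delta)|\le|W|+4$. To sharpen this, I would pick $v$ with $\{v,v^\delta\}\cap W(H,\delta)=\emptyset$ (such a $v$ exists since $|W(H,\delta)|<n/2$). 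Then $|W(H,\delta)\triangle\{v,v^\delta\}|=|W(H,\delta)|+2$, and by Lemma~\ref{swdiff} this equals either $|W(H_v,\delta)|$ or $n-|W(H_v,\delta)|$. The first option combined with the iso-class bound $|W(H_v,\delta)|\le|W|+2$ gives $|W(H,\delta)|\le|W|$, while the second combined with $|W(H,\delta)|\le|W|+4$ forces $n\le 2|W|+8$, contradicting the hypothesis.

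Having established $|W(H,\delta)|\le|W|$, the first paragraph's argument applies verbatim to $H$ (since $|W(H,\delta)|+2<n/2$) and yields $\max_v|W(H_v,\delta)|=|W(H,\delta)|+2$. The $t$-deck equality then produces the multiset identity
\[\{|W(G_v,\delta)|\mid v\in V\}\cup t\cdot\{|W|\}=\{|W(H_v,\delta)|\mid v\in V\}\cup t\cdot\{|W(H,\delta)|\},\]
whose maxima must agree, giving $|W|+2=|W(H,\delta)|+2$ and hence $|W(H,\delta)|=|W|=\max_v|W(G_v,\delta)|-2$. The main delicate point is disposing of the complement branch when identifying $W(H,\delta)$, which is precisely where the slack in the hypothesis $n>2|W|+8$ is consumed.
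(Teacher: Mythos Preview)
Your proof is correct. The route you take, however, differs in a meaningful way from the paper's.

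The paper argues directly at the level of $H$: since the $t$-decks agree, (up to relabelling by an element of $D_n$) one has $H=G_v$ or $H=G_{vw}$ for suitable $v,w$, and then Lemma~\ref{lem:switch:one:vertex:observation} applied once or twice gives $\swappingpoints(H,\delta)=W\bigtriangleup\{v,v^\delta\}$ or $W\bigtriangleup\{v,v^\delta\}\bigtriangleup\{w,w^\delta\}$, of size at most $|W|+4<n/2$. The max formula then drops out from the single observation that $W'\bigtriangleup\{u,u^\delta\}$ ranges over sizes in $[|W'|-2,|W'|+2]$ and hits the top because $2|W'|<n$.

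You instead never locate $H$ relative to $G$; you work entirely through the cards. You first prove that $|\swappingpoints(\cdot,\delta)|$ is an isomorphism invariant (via $\psi^{-1}\delta\psi\in\{\delta,\delta^{-1}\}$ for $\psi\in D_n$), then transport the bound $|\swappingpoints(G_u,\delta)|\le|W|+2$ across the $t$-deck to every card $H_v$, and only then push Lemma~\ref{lem:switch:one:vertex:observation} from $H_v$ back to $H$. This forces you into the two-branch analysis (the ``complement branch''), which you dispatch correctly and which indeed is where the full strength of $n>2|W|+8$ is needed. Your final multiset-of-sizes argument is cleaner than the paper's somewhat terse last line, since you make the invariance explicit rather than deferring it to a post-proof remark.

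What each buys: the paper's approach is shorter because identifying $H$ as $G_v$ or $G_{vw}$ collapses everything to one or two applications of Lemma~\ref{lem:switch:one:vertex:observation}, but it leans on an implicit relabelling step. Your approach is longer but is fully explicit about isomorphism versus equality, and isolates exactly which inequality rules out the complement; it would also adapt more readily if one wanted to track the whole multiset $\{|\swappingpoints(G_v,\delta)|\}$ rather than just its maximum.
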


\begin{proof}
The fact that $\swappingpoints(G_v,\delta)$ is well defined follows from Lemma~\ref{lem:switch:one:vertex:observation}.
If~$H$ is an oriented cycle that has an extended deck which is also an extended deck of~$G$, then there is a vertex~$v$ such that~$G_{v} = H$ or there are two not necessarily distinct vertices~$v,w$ such that~$G_{vw} = H$. Thus, by Lemma~\ref{lem:switch:one:vertex:observation},~$H_{W vv^\delta} = {(G_{v})}_{W vv^\delta}  = (G_{v})^\delta = {H}^{\delta}$ or~$H_{W vv^\delta ww^\delta} = {(G_{vw})}_{W vv^\delta ww^\delta}  = (G_{vw})^\delta = {H}^{\delta}$. Since~$|W| +4 < n/2$, 
in the first case, we have~$W' = \swappingpoints(H,\delta) = W \bigtriangleup \{v,v^\delta\}$, and in the second case
we have~$W' = \swappingpoints(H,\delta) = W \bigtriangleup \{v,v^\delta\}\bigtriangleup \{w,w^\delta\}$. In either case~$\swappingpoints(H,\delta)$ is well defined. 

For all~$v$ it holds that~$|W'|-2\leq |{W'\bigtriangleup \{v,v^\delta}\}|\leq|W'|+2$.
Since~$n> 2(|W|+ 4) \geq 2|W'|$, there exists a~$u$ such that~${W'\bigtriangleup \{u,u^\delta}\}$ has size~$|W'|+2$.  
Thus~$|W'| = \max\big\{|W(G_v,\delta)|-2 \mid  v\in V \big\}$. 
\end{proof}

Note that the fact that~$|\swappingpoints(G_v,\delta)| = |\swappingpoints(G_v,\delta^{-1})|$ for all~$v\in V$ implies that the value of~$\max\big\{|W(G_v,\delta)|-2 \mid  v\in V \big\}$ depends only on the deck. The previous lemma thus says in other words that~$|W|$ can be reconstructed, for any particular~$\delta$.

\begin{lemma}\label{lem:know:distances}
Let~$G$ be an oriented cycle. Suppose there exists a nontrivial rotation~$\delta$ that is not of order 2, such that
for~$W = \swappingpoints(G,\delta)$ we have~$|W| \geq 2$ and~$n>4|W|+4$,  then~$\dist(G,\delta)$ is reconstructible from every extended deck.
\end{lemma}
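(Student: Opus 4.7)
My plan is to iteratively apply Lemma~\ref{lem:know:size:w} on successively deeper decks to extract information about $W = W(G,\delta)$, and then to use that information to determine the distance set.

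First, I would show that for each card $\langle G_v\rangle$ in the extended deck, the size $|W(G_v,\delta)|$ is reconstructible. By Lemma~\ref{lem:switch:one:vertex:observation}, $W(G_v,\delta)$ coincides with $W \triangle \{v, v^\delta\}$, which has size at most $|W|+2$; since $n > 4|W|+4$ this is less than $n/2$, so $W(G_v,\delta)$ is the unique set of size less than $n/2$ witnessing $(G_v)_{W(G_v,\delta)} = (G_v)^\delta$. The same argument used in Lemma~\ref{lem:know:size:w} then recovers $|W(G_v,\delta)|$ from the isomorphism class of $G_v$. Since $|W_v|$ is $|W|-2$, $|W|$, or $|W|+2$ according to whether both, neither, or exactly one of $v$ and $v^\delta$ lies in $W$, the extended deck determines the multiset $\{|W_v|:v\in V\}$ and therefore the count $c_W(k) := |\{v\in W : v^\delta\in W\}|$, i.e.\ the number of pairs of $W$ at signed cyclic distance $k$.

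Next, I would iterate once more: for each card $\langle G_v\rangle$ and each $u$, $|W_{vu}|$ is an invariant of $\langle G_{vu}\rangle$, hence of the deck of $G_v$, so $c_{W_v}(k)$ is an invariant of $\langle G_v\rangle$. Consequently the extended deck of $G$ determines the multiset $\{(|W_v|, c_{W_v}(k)) : v\in V\}$. A direct case analysis, distinguishing by whether $v$ and $v^\delta$ lie in $W$, expresses $c_{W_v}(k)-c_W(k)$ as an explicit function of the four indicator values $\chi_W(v^{\delta^{-1}}), \chi_W(v), \chi_W(v^\delta), \chi_W(v^{\delta^2})$. Partitioning the cards by $|W_v|$ and examining the distribution of $c_{W_v}(k)$ within each class then counts specific $4$-tuple patterns of $\chi_W$-values along $\delta$-consecutive positions, which in turn gives us information on the pairwise distances of $W$ along $\delta$-shifts $k$, $2k$, $3k$.

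The hard part will be turning these counts into the full distance set $\dist(G,\delta)$. When $\gcd(k,n)=1$, every cyclic distance is of the form $jk \bmod n$, so the accumulated autocorrelations along $\delta$-shifts determine all pairwise distances; otherwise, distances between distinct $\delta$-orbits are not directly visible, and one must either iterate deeper into the deck to pick up autocorrelations at higher $\delta$-shifts, or exploit the structural constraint $G_W = G^\delta$ to transfer information across orbits. The precise bound $n > 4|W|+4$ is calibrated so that the two levels of iteration of Lemma~\ref{lem:know:size:w} go through unambiguously, with $|W_{vu}| \leq |W|+4 < n/2$ at every relevant step.
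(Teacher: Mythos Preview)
Your approach has a real gap. By extracting only the sizes $|W_v|$, $|W_{vu}|$, \ldots\ you are, at each level of iteration, computing the autocorrelation of the indicator function of $W$ at shifts that are multiples of the rotation step $r$ of $\delta$. This can only ever detect pairwise distances in $W$ that are multiples of $\gcd(r,n)$. When $\gcd(r,n)>1$, distances between elements of $W$ lying in different $\delta$-orbits are invisible to your scheme, no matter how many levels you iterate; ``iterating deeper'' does not create new shift values, and you give no concrete mechanism for ``exploiting the structural constraint $G_W=G^\delta$'' to cross orbits. So as stated the argument does not recover $\dist(G,\delta)$.

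The missing observation is that not just $|W(G_v,\delta)|$ but the entire set $\dist(G_v,\delta)$ is an invariant of the isomorphism class $\langle G_v\rangle$: if $H=(G_v)^\sigma$ for $\sigma\in D_n$, then $W(H,\delta)$ is a dihedral image of $W(G_v,\delta)$ (using $W(G_v,\delta^{-1})=W(G_v,\delta)^{\delta^{-1}}$ when $\sigma$ is a reflection), and pairwise cyclic distances are dihedral-invariant. With this in hand the paper's argument is a one-step count rather than an iteration: for each $d\neq r$, the number of cards with $d\in\dist(G_v,\delta)$ is at most $4|W|$ when $d\notin\dist(G,\delta)$ (a new distance-$d$ pair must involve $v$ or $v^\delta$ and a point of $W$) and at least $n-4$ when $d\in\dist(G,\delta)$ (only the at most four choices $v\in\{w_1,w_2,w_1^{\delta^{-1}},w_2^{\delta^{-1}}\}$ can destroy a fixed witnessing pair $w_1,w_2$). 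The hypothesis $n>4|W|+4$ is exactly what separates these two ranges. For $d=r$ your observation is the right one and matches the paper: $r\in\dist(G,\delta)$ iff some card has $|W(G_v,\delta)|=|W|-2$.
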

\begin{proof}
Let~$r$ be the integer that is equal to the distance between a vertex and its image under~$\delta$.
Since~$|W| \geq 2$ and~$n>4|W|+4$ together imply~$n >2|W|+8$, Lemma~\ref{lem:know:size:w} applies and~$|W|$ can be reconstructed.
First note that for any integer~$d$ the value of~$|\{v \mid d \in \dist(G_v,\delta)\}|$ 
depends only on the deck.
\begin{itemize}
\item For~$d\notin  \dist(G,\delta)$ and~$d\neq r$ we have~$|\{v \mid d \in \dist(G_v,\delta)\}| \leq 4 |W|$.
\item For~$d\in  \dist(G,\delta)$ and~$d\neq r$ we have~$|\{v \mid d \in \dist(G_v,\delta)\}| \geq n-4$.
\end{itemize}

These inequalities remain true if the sets on the left hand side range over all cards in an extended deck of~$G$.
Thus if~$4|W| < n-4$, then we can decide whether~$d$ is in~$\dist(G,\delta)$, for~$d\neq r$.
To show that it is also possible to decide~$r\in  \dist(G,\delta)$, we note that~$r\in  \dist(G,\delta)$ if and only if there exist a vertex~$v\in V$ such that~$|\swappingpoints(G_v,\delta)| = |W|-2$.
\end{proof}

\begin{lemma}\label{lem:delta:not:among}
Let~$G$ be an oriented cycle. Suppose there exists a nontrivial rotation~$\delta$ that rotates by~$r< n/2$ positions, such that
for~$W = \swappingpoints(G,\delta)$ we have~$r\notin\dist(G,\delta)$,~$|W|\geq 2$, and~$n>4|W|+4$. Then~$G$ is reconstructible from every extended deck.

\end{lemma}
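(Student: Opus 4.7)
The plan is to argue by contradiction. Suppose there is an oriented cycle $H\not\cong G$ sharing the extended deck of $G$. By Lemma~\ref{lem:know:size:w} we have $|W(H,\delta)| = |W|$, and by Lemma~\ref{lem:know:distances} we get $\dist(H,\delta)=\dist(G,\delta)$, so in particular $r\notin\dist(H,\delta)$.

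Next, I would invoke the structural analysis used in the proof of Lemma~\ref{lem:know:size:w} to fix a labeling of $H$ so that $H = G_v$ or $H = G_{vw}$ for some (possibly equal) vertices $v,w$. Applying the identity $(G_U)^\delta = (G^\delta)_{U^\delta} = (G_W)_{U^\delta}$ and using the bound $n>4|W|+4$ to ensure the unique small representative, I get
\[
W(H,\delta) = W\bigtriangleup\{v,v^\delta\}\quad\text{or}\quad W(H,\delta) = W\bigtriangleup\{v,v^\delta\}\bigtriangleup\{w,w^\delta\}.
\]

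The central step is a case analysis on these two cases. Using $|W(H,\delta)|=|W|$, $r\notin\dist(H,\delta)$, and the sparseness of $W$ implied by $r\notin\dist(G,\delta)$ (equivalently $W\cap W^\delta = \emptyset$), the goal is to show that $W(H,\delta)$ must be a dihedral image $W^\pi$ of $W$ for some $\pi\in D_n$. In the single-switch case, the size equation forces exactly one of $v,v^\delta$ to lie in $W$, and the distance condition then also forces $v^{\delta^2}\notin W$; together with the rigidity of $W$ this pins down the position of $v$ relative to $W$ tightly enough to exhibit~$\pi$. The double-switch case is handled by a parallel but more elaborate split on the memberships of $v,v^\delta,w,w^\delta$ in~$W$.

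Finally, once $W(H,\delta)=W^\pi$ is established, applying $\pi^{-1}$ to $H$ produces a graph whose switching data with respect to $\delta$ (or $\delta^{-1}$, if~$\pi$ is a reflection) matches that of $G$; the recurrence on the edge orientations implied by $G_W = G^\delta$ has essentially a unique solution on each $\delta$-orbit up to global edge reversal (which is itself realized by a reflection), so $H^{\pi^{-1}}$ equals $G$ or its reverse, both isomorphic to~$G$. This yields the desired contradiction. The main obstacle is the case analysis forcing $W(H,\delta) = W^\pi$: the constraints on the distance set might a priori allow other perturbations, and it is the combination of the disjointness $W\cap W^\delta=\emptyset$ with the room afforded by $n>4|W|+4$ that makes the combinatorial classification tractable.
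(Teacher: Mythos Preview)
Your outline diverges substantially from the paper's argument, and its final step contains a genuine error.

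The problematic step is the last one: you assert that once $W(H,\delta)=W^\pi$, the relation $G'_W=(G')^{\delta}$ (or $(G')^{\delta^{-1}}$) for $G'=H^{\pi^{-1}}$ determines $G'$ up to global edge reversal. This is false whenever $\gcd(n,r)>1$. Writing the relation $G_W=G^\delta$ as a recurrence $o_i = o_{i-r}\oplus f_i$ on edge orientations (with $f_i$ recording whether $e_i$ is flipped by the switch at $W$), the recurrence decouples along the $\gcd(n,r)$ orbits of rotation by~$r$ on edge indices, and each orbit admits two solutions once the parity condition is met. Hence there are $2^{\gcd(n,r)}$ solutions in total, and they are \emph{not} all isomorphic. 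For a concrete instance within the lemma's hypotheses, take $n=14$, $r=2$, $W=\{0,7\}$: then $|W|=2$, $n>4|W|+4$, $r\notin\dist(G,\delta)=\{7\}$, and the four solutions split into two non-isomorphic pairs (one has cyclic run-length pattern $7,7$, the other has pattern $1,1,1,1,1,2,1,1,1,1,1,2$). So showing $W(H,\delta)=W^\pi$ would not finish the proof; you would still need to rule out the other solutions using deck information, which is essentially the original problem. Separately, the ``central step'' forcing $W(H,\delta)=W^\pi$ is itself dubious for $|W|\ge 3$: equality of distance sets does not in general force congruence (homometric sets exist), and you have not indicated how the extra constraint $W\cap W^\delta=\emptyset$ would close this.

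The paper's proof avoids all of this by working from a single well-chosen card rather than from~$H$. It uses only Lemma~\ref{lem:know:size:w} (not Lemma~\ref{lem:know:distances}) to know that $|\swappingpoints(H,\delta)|=|W|$. Then it counts to find a \emph{good} vertex~$v'$ with $v',(v')^\delta\notin W$ and neither at distance~$r$ from any element of~$W$; since $r\notin\dist(G,\delta)$, at most $4|W|<n$ vertices fail one of these conditions. The point of the choice is that $\swappingpoints(G_{v'},\delta)=W\cup\{v',(v')^\delta\}$ has size $|W|+2$, and for every $v''\ne v'$ the further symmetric difference with $\{v'',(v'')^\delta\}$ cannot drop back to size~$|W|$ (the only pair at distance~$r$ in $W\cup\{v',(v')^\delta\}$ is $\{v',(v')^\delta\}$ itself, and $r<n/2$ prevents $\{v'',(v'')^\delta\}=\{v',(v')^\delta\}$ for $v''\ne v'$). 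Hence on the card $G_{v'}$, neither $H=G_{v'}$ nor $H=(G_{v'})_{v''}$ for $v''\ne v'$ is possible, forcing $H=(G_{v'})_{v'}=G$.
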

\begin{proof}

The inequalities~$|W| \geq 2$ and~$n>4|W|+4$ imply that~$n>2|W|+8$. 
By Lemma~\ref{lem:know:size:w}, for any oriented cycle~$H$ that has an extended deck which is also an extended deck of~$G$ we have~$|\swappingpoints(H,\delta)|= |\swappingpoints(G,\delta)|$.
We show the existence of a vertex~$v'$ such that
\begin{enumerate}\itemsep=0pt
\item~$v',{(v')}^{\delta}\notin W$,\label{count:1:l2}
\item~$v'$ is not at distance~$r$ from any vertex in~$W$, and\label{count:2:l2}
\item~${(v')}^{\delta}$ does not have distance~$r$ from any vertex in~$W$.\label{count:3:l2}
\end{enumerate}

We call such a vertex \emph{good}. 
We count the number of vertices that violate one of the conditions:
There are~$|W|$ vertices in~$W$ and~$|W|$ vertices in~$W^{\delta}$, thus there are at most~$2|W|$ vertices that violate Condition~\ref{count:1:l2}.
There are at most~$|W|$  vertices that violate Condition~\ref{count:2:l2} but do not violate Condition~\ref{count:1:l2}.
There are at most~$|W|$  vertices that violate Condition~\ref{count:3:l2} but do not violate Condition~\ref{count:1:l2}.
Thus in total we conclude that at most~$2|W|+|W|+|W|$ vertices are not good.
Since~$n>4|W|$, there is a good vertex.

Let~$v'$ be a good vertex. We show that for all~$v''\neq v'$ the set~$\swappingpoints(G_{v'v''},\delta) = W \bigtriangleup \{v',(v')^{\delta}\} \bigtriangleup \{v'',(v'')^{\delta}\}$ is not of size~$|W|$. Since~$v'$ is good, the set~$W\bigtriangleup \{v',(v')^{\delta}\}= W \cup  \{v',(v')^{\delta}\}$ is of size~$|W|+2$. 
Furthermore, since~$v'$ is good and the distance~$r$ is not in~$\dist(G,\delta)$, the only two vertices in~$W \cup  \{v',(v')^{\delta}\}$ that are distance~$r$ apart are~$v'$ and~$(v')^{\delta}$. Since~$r\neq n/2$, for~$v''\neq v'$ the set~$\{v',(v')^{\delta}\}$ is not equal to the set~$\{v'',(v'')^{\delta}\}$. This shows that~$W \bigtriangleup \{v',(v')^{\delta}\} \bigtriangleup \{v'',(v'')^{\delta}\}$ is not of size~$|W|$.

Let~$H$ be an oriented cycle that has an extended deck which is also an extended deck of~$G$. Consider the card~$G_{v'}$ in this extended deck.
Since~$|\swappingpoints(H,\delta)|= |W|$, we conclude that~$H\neq G_{v'}$ and that~$H \neq (G_{v'})_{v''}$ for all~$v''\neq v'$.
Thus~$H =  (G_{v'})_{v'} = G$.
\end{proof}

\begin{lemma}\label{lem:W:size:4}
Let~$G$ be an oriented cycle on~$n>28$ vertices. If there is a nontrivial rotation~$\delta$ which is not of order~2, such that~$|\swappingpoints(G,\delta)| = 4$ then~$G$ is reconstructible from every extended deck. 

\end{lemma}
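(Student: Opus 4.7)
The plan is to extend the ``good vertex'' argument of Lemma~\ref{lem:delta:not:among} by adding a \emph{dist-separation} condition that uses the reconstructibility of $\dist(G,\delta)$ established in Lemma~\ref{lem:know:distances}. Let $r$ be the rotation amount of~$\delta$. By Lemma~\ref{lem:delta:not:among} we may assume $r\in\dist(G,\delta)$, so that $W:=\swappingpoints(G,\delta)$ contains at least one $\delta$-related pair. Write $k\ge 1$ for the number of such pairs; since $\delta$ has order at least~$5$ and $|W|=4$, the set $W$ is not closed under~$\delta$ and hence $k\le 3$.

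I will look for $v'\in V$ satisfying the three goodness conditions from the proof of Lemma~\ref{lem:delta:not:among} and additionally the following: for every $\delta$-related pair $\{a,a^\delta\}\subseteq W$, the multiset of pairwise cyclic distances of the $4$-set $N_a:=(W\setminus\{a,a^\delta\})\cup\{v',(v')^\delta\}$ differs from $\dist(W)$. Given such $v'$, the reconstruction argument mirrors Lemma~\ref{lem:delta:not:among}: if $H$ is any oriented cycle with the same extended deck as~$G$, then Lemmas~\ref{lem:know:size:w} and~\ref{lem:know:distances} yield $|\swappingpoints(H,\delta)|=4$ and $\dist(H,\delta)=\dist(W)$, while goodness gives $|\swappingpoints(G_{v'},\delta)|=6$, so $G_{v'}\not\cong H$ and thus $H\cong G_{v'x'}$ for some $x'$. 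Analyzing $\swappingpoints(G_{v'x'},\delta)=W\bigtriangleup\{v',(v')^\delta\}\bigtriangleup\{x',(x')^\delta\}$ through Lemma~\ref{lem:switch:one:vertex:observation}, the goodness conditions rule out every ``cross'' $\delta$-related pair in $W\cup\{v',(v')^\delta\}$, so this set has size~$4$ only when $\{x',(x')^\delta\}$ equals either $\{v',(v')^\delta\}$ (forcing $x'=v'$ because $\delta$ is not of order~$2$) or one of the $k$ pairs of~$W$ (in which case $G_{v'x'}$ realizes the set $N_a$, whose distance multiset differs from $\dist(H,\delta)$ by the dist-separation condition, a contradiction). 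Thus $H\cong G$.

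To establish existence of such $v'$, I count bad vertices. The proof of Lemma~\ref{lem:delta:not:among} shows that at most $4|W|=16$ vertices violate the three goodness conditions. For each of the at most three dangerous pairs $\{a,a^\delta\}\subseteq W$, let $\{w,w'\}=W\setminus\{a,a^\delta\}$; using the identity $d(v'+r,x)=d(v',x-r)$ and the observation that both $\dist(W)$ and $\dist(N_a)$ share the distances $r$ and $d(w,w')$, the equality $\dist(N_a)=\dist(W)$ reduces to the statement that $v'$ has the same distance profile as~$a$ with respect to the multiset $S=\{w,w-r,w',w'-r\}$. The vertices with this profile form the orbit of $a$ under the setwise stabilizer of $S$ in the dihedral group $D_n$. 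Because $\delta$ has order at least~$5$, we have $r\notin\{n/3,n/4\}$, which rules out any $3$- or $4$-fold rotational symmetry of $S$, so the stabilizer contains only the natural reflection and the orbit of $a$ contains at most one vertex besides $a$ itself. Summing over the dangerous pairs contributes at most~$3$ further bad vertices, so at most $16+3=19<n$ vertices are bad and a suitable $v'$ exists.

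The main technical obstacle is the orbit bound for the stabilizer of~$S$. The case that requires care is when $\{w,w'\}$ is itself a $\delta$-related pair (which happens for $k\ge 2$), so that $S$ collapses to the $3$-multiset $\{w-r,w,w+r\}$ with $w$ of multiplicity~$2$; a direct computation shows that in this degenerate situation the only nontrivial twin of~$a$ is $2w-a$, preserving the orbit bound and hence the counting argument above.
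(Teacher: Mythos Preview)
Your dist-separation step has a real gap coming from a set/multiset mismatch. In this paper, $\dist(G,\delta)$ is the \emph{set} of pairwise distances of $W(G,\delta)$, and Lemma~\ref{lem:know:distances} only reconstructs this set. Your reduction ``$\dist(N_a)=\dist(W)$ iff $v'$ has the same distance profile as $a$ with respect to $S=\{w,w-r,w',w'-r\}$'' is valid for \emph{multisets} of distances, not for sets: the two shared entries $r$ and $d(w,w')$ can absorb differences, so equal distance sets do not force equal profiles. Conversely, showing that the distance multiset of $N_a$ differs from that of $W$ does not contradict the set equality $\dist(H,\delta)=D$, which is all Lemma~\ref{lem:know:distances} gives you. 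Either interpretation breaks the argument; to salvage this route you would need to reconstruct the multiset (which Lemma~\ref{lem:know:distances} does not do) or to bound directly the number of $v'$ for which $N_a$ has distance \emph{set} exactly $D$, and the latter bound is considerably looser.

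There are two smaller loose ends. You twice assume $\delta$ has order at least $5$, but the lemma only asks that $\delta$ is not of order~$2$ (the stronger hypothesis is only available in the application via Lemma~\ref{extrot}). And the assertion that vertices with the same distance profile to $S$ form a single orbit under the setwise stabiliser of $S$ is not a standard fact and needs proof even for the special shape of $S$; you also do not rule out a rotation of order~$2$ in that stabiliser (possible when $w'\equiv w+n/2$), which would enlarge the orbit.

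For comparison, the paper sidesteps all of this by using a set-based third condition: it asks that for at least three of the four vertices $x\in W$ the distance $d(v',x)$ lies outside $D$. Then whenever $\{x',(x')^\delta\}$ removes a $\delta$-pair of $W$, the resulting $4$-set still contains $v'$ together with one of those three ``far'' vertices, giving a pair whose distance is not in $D$ and hence a contradiction with the \emph{set} identity $\dist(H,\delta)=D$. A double count bounds the vertices failing that condition by $4|D|-6$, and together with the first two conditions this gives at most $28$ bad vertices, matching the hypothesis $n>28$.
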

\begin{proof}
Let~$r$ be the integer that is equal to the distance between a vertex and its image under~$\delta$. Define~$D= \dist(G,\delta)$. By the previous lemma, we may assume that~$r\in D$.
The conditions on~$W=\swappingpoints(G,\delta)$ imply that~$n>2|W|+8$ and~$n>4|W|+4$.
Thus, by Lemmas~\ref{lem:know:size:w} and~\ref{lem:know:distances}, for any graph~$H$ that has an extended deck which is also an extended deck of~$G$, we have~$|\swappingpoints(H,\delta)| = 4$ and~$\dist(H,\delta)= D$.

We show the existence of a vertex~$v' \in V$ such that
\begin{enumerate}\itemsep=0pt
\item~$v',(v')^{\delta}\notin W$,\label{count:1}
\item~$v'$ is not at distance~$r$ from any vertex in~$W$, and\label{count:2}
\item~for at least three distinct vertices~$x_1,x_2,x_3\in W$ the distance from~$v'$ to~$x_i$ is not contained in~$D$.\label{count:3} 
\end{enumerate}

We call such a vertex \emph{good}. 
We count the number of vertices that violate one of the conditions:
There are~$4$ vertices in~$W$ and~$4$ vertices in~$W^{\delta}$. However, since~$r\in D$, one vertex lies in~$W \cap W^{\delta}$, thus there are at most~$7$ vertices that violate Condition~\ref{count:1}.
Since~$r\in D$, there are at most~$3$  vertices that violate Condition~\ref{count:2} but do not violate Condition~\ref{count:1}.
We now bound the number of vertices for which at least~$2$ of the distances to vertices in~$W$ are contained in~$D$. We double count the ordered pairs~$(x,u)$ with~$x\in W$,~$u\in V$, where $u$ doesn't satisfy Condition~\ref{count:1} or~\ref{count:2} and the distance between~$x$ and~$u$ is in~$D$. There are at most~$|D| \cdot 2\cdot 4 - 4\cdot 3$ such pairs, since there are~$4\cdot 3$ pairs~$(x,u)$  with~$x,u\in W$. Thus there are at most~$(8 |D|-12) / 2$ vertices not in~$W$ with two or less distances to vertices in~$W$ not contained in~$D$. 
Thus in total we conclude that at most~$7+ 3 + 4|D|-6 \leq   28$ vertices are not good.
Since~$n>28$, there is a good vertex.

Let~$v'$ be a good vertex. Since~$v'$ is good, the set~$W\bigtriangleup \{v',(v')^{\delta}\}= W \cup  \{v',(v')^{\delta}\}$ is of size~$|W|+2$. We now show that for all~$v''\neq v'$ the set~$W'' = W \bigtriangleup \{v',(v')^{\delta}\} \bigtriangleup \{v'',(v'')^{\delta}\}$ has two vertices of a distance not contained in~$D$, or the set~$|W''|$ is not of size~$|W|$.

Case 1: If~$(v'')^{\delta} = v'$ then~$v'' \notin W\bigtriangleup \{(v'),(v')^{\delta}\}$, since with the exception of~$(v')^{\delta}$ no vertex in~$W\cup \{(v'),(v')^{\delta}\}$ has distance~$r$ from~$v'$ and~$r\neq n/2$. Thus~$|W''|\neq|W|$.

Case 2: If~$v''\neq v'$ and~$(v'')^{\delta} \neq v'$, then~$v'\in W''$. Three of the vertices in~$W$ are at a distance to~$v'$ which is not contained in~$D$. At least one of these vertices is contained in~$W''$.

Let~$H$ be a graph that has an extended deck which is also an extended deck of~$G$. Consider the card~$G_{v'}$ in this extended deck.
Since~$|\swappingpoints(H,\delta)|= 4$ and~$D= \dist(H,\delta)$, we conclude that~$H\neq G_{v'}$ and that~$H \neq (G_{v'})_{v''}$ for all~$v''\neq v'$.
Thus~$H =  (G_{v'})_{v'} = G$.
 \end{proof}

\begin{lemma}\label{lem:W:size:2}
Let~$W = \swappingpoints(G,\delta)$ where~$\delta$ is a nontrivial rotation of order at least~$5$ rotating by~$r$ positions. If\/~$|W| = 2$,~$n>12$  and~$r\in \dist(G,\delta)$, then~$G$ is reconstructible from every extended deck.
\end{lemma}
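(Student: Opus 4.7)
The approach will mirror Lemma~\ref{lem:W:size:4}, combining the deck-invariants from Lemmas~\ref{lem:know:size:w} and~\ref{lem:know:distances} (whose hypotheses $n>2|W|+8$ and $n>4|W|+4$ both reduce to $n>12$ for $|W|=2$) with a careful good-vertex analysis. Since $r\in\dist(G,\delta)$ and $|W|=2$ I may write $W=\{a,a^\delta\}$ for a unique $a$ (unique because $\delta^2\ne\mathrm{id}$); by the two lemmas any oriented cycle $H$ with the same extended deck as $G$ satisfies $|\swappingpoints(H,\delta)|=2$ and $\dist(H,\delta)=\{r\}$. I assume for contradiction that such an $H\not\cong G$ exists.

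I call $v'\in V$ \emph{good} if $v'\notin\{a^{\delta^k}:-2\le k\le 2\}$; these five excluded vertices are distinct since $\delta$ has order at least~$5$, so for $n>12$ there are at least eight good vertices. For a good~$v'$, the pair $\{v',(v')^\delta\}$ is disjoint from~$W$. Since $\<G_{v'}\>$ appears in~$H$'s deck, $H\cong G_{\{v',u\}}$ for some~$u$, and the size-$2$ requirement on
$\swappingpoints(G_{\{v',u\}},\delta)=W\bigtriangleup\{v',(v')^\delta\}\bigtriangleup\{u,u^\delta\}$
 forces $\{u,u^\delta\}$ to be a $\delta$-rotation-pair inside the $4$-element set $W\cup\{v',(v')^\delta\}$. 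The only such pairs---given $v'$ good, which excludes every rotation-pair mixing~$W$ with $\{v',(v')^\delta\}$---are $W$ itself (giving $u=a$, since $\delta^2\ne\mathrm{id}$) and $\{v',(v')^\delta\}$ (giving $u=v'$, hence $H\cong G$). Both pass the distance-$\{r\}$ check. So from each good~$v'$ either $H\cong G$ or $H\cong G_{\{v',a\}}$.

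To eliminate the alternative $H\cong G_{\{v',a\}}$, I would apply this dichotomy to two distinct good vertices $v_1',v_2'$. The ensuing isomorphism $G_{\{v_1',a\}}\cong G_{\{v_2',a\}}$ extends to some $\tau\in D_n$, yielding $\tau\in\varGamma(G)$ with $\swappingpoints(G,\tau)=\{v_1',a,(v_2')^\tau,a^\tau\}$ modulo complementation (by Lemma~\ref{swdiff}). Each $\tau\in\varGamma(G)$ contributes at most one forbidden choice of~$v_2'$ given~$v_1'$; in particular $\tau=\delta^k$ yields $v_2'=(v_1')^{\delta^{-k}}$, confining $v_2'$ to the $\delta$-orbit of~$v_1'$. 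The degenerate case where $\delta$ acts transitively collapses at once, because then every $p\in V\setminus\{a\}$ lies in~$a$'s orbit and $H=G_{\{a,p\}}=G^{\delta^k}\cong G$ for some~$k$, contradicting $H\not\cong G$ directly.

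The main obstacle will be the remaining cases, where one must show that the total number of $v_2'$ forbidden by rotations and reflections in $\varGamma(G)$ is strictly smaller than the $\ge n-5$ good vertices, so that a legitimate $v_2'$ survives to force the contradiction. I expect that the very rigid form of $\swappingpoints(G,\tau)$ dictated by the hypothesis $W=\{a,a^\delta\}$ (together with $\delta\in\varGamma(G)$ of order at least~$5$) is the key to bounding this forbidden list; specifically, a reflection $\tau\in\varGamma(G)$ must conjugate $\delta$ to $\delta^{-1}$ and hence interact with $W$ in a highly constrained way, keeping the reflection contribution small relative to~$n$.
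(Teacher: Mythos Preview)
Your proposal is explicitly incomplete: you identify ``the main obstacle'' yourself and write ``I expect that'' for the crucial step of eliminating the alternative $H\cong G_{\{v',a\}}$. The plan to bound the forbidden choices of $v_2'$ via $\varGamma(G)$ is not carried out, and it is not clear it can be without substantial further work; for instance, when $G_a$ happens to be the directed $n$-cycle its automorphism group is the full rotation group, so all the graphs $G_{\{v',a\}}=(G_a)_{v'}$ are isomorphic, and while that extreme case collapses (as you note), intermediate cases require an analysis you have not provided.

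The paper avoids this difficulty by choosing the \emph{opposite} kind of vertex. Rather than a ``good'' vertex far from $a$, it switches at $w_2:=a^\delta$ (and observes that $w_0:=a^{\delta^{-1}}$ behaves the same way). These are precisely the \emph{only} vertices $v$ for which $|\swappingpoints(G_v,\delta)|=2$, since $\swappingpoints(G_v,\delta)=\{a,a^\delta\}\bigtriangleup\{v,v^\delta\}$ has size~$2$ exactly when $v\in\{a^{\delta^{-1}},a^\delta\}$; moreover $G_{w_0}\cong G_{w_2}$ via $\delta$. Hence the deck contains a uniquely identifiable isomorphism type of card with $|\swappingpoints(\cdot,\delta)|=2$, and for this card $\swappingpoints(G_{w_2},\delta)=\{a,a^{\delta^2}\}$ has its two points at distance $\ne r$ (because $\delta$ has order at least~$5$, so $\delta^2$ does not move by $r$). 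From the identified card $G_{w_2}$ one then checks directly that the only $v''$ for which $(G_{w_2})_{v''}$ has $|\swappingpoints|=2$ \emph{and} distance~$r$ are $v''\in\{a,a^\delta\}$, both returning a graph isomorphic to~$G$. So $H\cong G$ immediately, with no second alternative to eliminate.

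The missed idea, then, is that the vertices adjacent to $a$ in its $\delta$-orbit---exactly the ones you excluded as ``bad''---are the useful ones: switching there keeps $|\swappingpoints|=2$ but perturbs the distance, which singles that card out in the deck and forces a unique reconstruction.
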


\begin{proof}
The assumptions~$|W| = 2$ and~$n>12$ imply that~$n>2|W|+8$.

For the sake of contradiction we assume~$G$ is not reconstructible. 
By the assumptions,~$W$ is of the form~$W = \{w_1,w_2\}$ with~$w_2 = w_1^{\delta}$. Define~$w_0 = w_1^{\delta^{-1}}$ and~$w_3 = w_2^{\delta}$.

Since~$\swappingpoints(G_v,\delta) =  \{w_1,w_2\}\bigtriangleup \{v,v^{\delta}\}$ for any vertex~$v$, it holds that~$|\swappingpoints(G_v,\delta)| = 2$ if and only if~$v\in \{w_0,w_2\}$. Since~$G_{w_1w_2} = G^{\delta}$, we have~$(G_{w_0})^\delta = (G_{w_0})_{w_1w_2 w_0w_1}= G_{w_2}$. 
Thus, all cards~$G_v$ with~$|\swappingpoints(G_v,\delta)| = 2$ are isomorphic. 
We can therefore identify the isomorphism type of the card~$G_{w_2}$ from the deck.
Note that~$\swappingpoints(G_{w_2},\delta) = \{w_1,w_3\}$.

We first show that for every vertex~$v' \notin\{w_1,w_2\}$ the set
~$\swappingpoints(G_{w_2 v'},\delta) = \{w_1,{w_3}\} \bigtriangleup\{v',{{v'}^\delta\}}$ 
is either not of size 2, or it is of size two, but its two vertices are not at distance~$r$: Indeed, whenever~$v'\notin \{w_0,w_1,w_2,w_3\}$, then~$\{w_1,{w_3}\}\bigtriangleup \{v',(v')^{\delta}\}$ is not of size two. If~$v'\in~\{w_0, w_3\}$ then~$\{w_1,{w_3}\}\bigtriangleup \{v',(v')^{\delta}\}$ is of the form~$\{u,u^{\delta^3}\}$ for some vertex~$u\in V$ and thus, since~$\delta$ is of order at least~5, does not contain two vertices at distance~$r$.

Since~$(w_1)^{\delta} =w_2$, we know that~$((G_{w_2})_{w_1})^\delta =G = (G_{w_2})_{{w_2}}$. 
Let~$H$ be a graph that has an extended deck which is also an extended deck of~$G$.
For all~$v' \notin \{w_1,w_2\}$ we have that~$|\swappingpoints(G_{w_2 v'},\delta)| \neq 2$ or~$r\notin \swappingpoints(G_{w_2 v'},\delta)$. Thus, for all~$v' \notin \{w_1,w_2\}$,~$H\neq G_{w_2 v'}$. Since~$\delta$ is not of order~3,~$\dist(G,\delta)\neq  \dist(G_{w_2},\delta)$ and thus~$H\neq G_{w_2}$.
For all~$v' \in \{w_1,w_2\}$ the graph~$G_{w_2{v'}}$ is isomorphic to~$G$. Therefore~$H$ is isomorphic to~$G$. In any case we obtain a contradiction.
\end{proof}

\begin{theorem}\label{thm:at:least:30:vertices:reconstructible}
Every oriented cycle on at least 30 vertices is reconstructible from each of its extended decks.
\end{theorem}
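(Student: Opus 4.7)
The plan is to argue by contradiction, reducing the theorem to a short case analysis that pastes together the previous lemmas of this section. Assume there is an oriented cycle $G$ on $n\ge 30$ vertices and a non-isomorphic oriented cycle $H$ sharing an extended deck with $G$. Applying Lemma~\ref{extrot}, I obtain a rotation $\gamma\in D_n$ of order at least $5$ and a set $W\subseteq V$ with $|W|\in\{2,4\}$ satisfying $G_W=G^\gamma$. Replacing $\gamma$ by $\gamma^{-1}$ if necessary (which only replaces $W$ by $W^{\gamma^{-1}}$ of the same size), I may assume that $\gamma$ rotates by $r<n/2$ positions. Note that since $\gamma$ has order at least $5$, it is not of order~$2$, and in particular $r\neq n/2$.

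From here the plan is to split into the two possible values of $|W|$ and invoke the right lemma in each case. If $|W|=4$, then since $n\ge 30>28$ and $\gamma$ is not of order $2$, Lemma~\ref{lem:W:size:4} applies directly, yielding that $G$ is reconstructible from every extended deck, a contradiction. If instead $|W|=2$, I further split according to whether the rotation distance $r$ is in $\dist(G,\gamma)$. The inequalities $|W|\ge 2$ and $n\ge 30>4|W|+4=12$ hold in either subcase, so that the hypotheses of the conditional reconstruction lemmas are satisfied.

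In the subcase $r\notin\dist(G,\gamma)$, Lemma~\ref{lem:delta:not:among} immediately applies and gives reconstructibility. In the subcase $r\in\dist(G,\gamma)$, I use that $\gamma$ has order at least $5$ and $n\ge 30>12$, so that Lemma~\ref{lem:W:size:2} applies and again gives reconstructibility. Either subcase contradicts the existence of the nonisomorphic $H$, completing the proof.

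The content of the theorem lives entirely in the earlier lemmas; there is no genuine obstacle here beyond checking that the numerical hypotheses ($n>28$, $n>12$, $n>4|W|+4$, $\gamma$ not of order $2$, $r<n/2$) all follow from $n\ge 30$ and from $\gamma$ having order at least $5$. The only mild subtlety to be careful about is making sure the chosen orientation of $\gamma$ gives $r<n/2$ so that Lemma~\ref{lem:delta:not:among} is applicable as stated; this is handled by the preliminary replacement of $\gamma$ by $\gamma^{-1}$ if needed, which is harmless because $\swappingpoints(G,\gamma^{-1})=W^{\gamma^{-1}}$ has the same size as $W$ and $\dist(G,\gamma)=\dist(G,\gamma^{-1})$.
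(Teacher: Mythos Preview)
Your proof is correct and essentially identical to the paper's: both derive a rotation of order at least~5 with $|W|\in\{2,4\}$ from Lemma~\ref{extrot} and then dispatch the cases using Lemmas~\ref{lem:delta:not:among}, \ref{lem:W:size:4}, and~\ref{lem:W:size:2}. The only cosmetic differences are that you split first on $|W|$ rather than on whether $r\in\dist(G,\gamma)$, and you are more explicit about replacing $\gamma$ by $\gamma^{-1}$ to ensure $r<n/2$, a point the paper leaves implicit.
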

\begin{proof}
Suppose~$G$ is an oriented cycle on~$n\geq 30$ vertices that is not reconstructible from one of its extended~decks. By Lemma~\ref{extrot} there is a set~$W\subseteq V$ of size 2 or 4 such that~$G_W=G^\gamma$ for some rotation~$\delta$ of order at least~5. Suppose~$\delta$ operates by rotating~$G$ by~$r$ positions. If~$r\notin\dist(G,\delta)$ then~$G$ is reconstructible by Lemma~\ref{lem:delta:not:among}.
If~$r\in\dist(G,\delta)$, then~$G$ is reconstructible by Lemma~\ref{lem:W:size:2} if~$|W|=2$ and by Lemma~\ref{lem:W:size:4} if~$|W| = 4$.
\end{proof}

Figures~\ref{fig:3:cycles:with:equiv:t:decks}--\ref{fig:8:cycles:with:equiv:t:decks} show families of non-isomorphic oriented cycles which have the same~$t$-deck for some~$t\in \mathbb{Z}$. In fact Figure~\ref{fig:5:cycles:with:equiv:t:decks} shows two graphs which have the same~$(-1)$-decks, where the~$(-1)$-deck of graph~$D$ is obtained from the deck of~$D$ by deleting~$\langle D\rangle$.
All graphs have been computed using the graph generation package \texttt{nauty}~\cite{McKay:1981} developed by the first author. Together with Theorem~\ref{thm:at:least:30:vertices:reconstructible}, this computation  also demonstrates that there are no other examples.

\begin{figure}[p]
 \hfill
  \begin{minipage}[t]{.40\textwidth}
    \begin{center}  
     \begin{tikzpicture}
     \node[graphcollection] (rnum)  at (0,0) {%
     \BoxedCircularGraphNOBOUND {1/0/\R, 2/120/\R, 3/240/\R}{1/2,2/3,3/1}\,\,
     \BoxedCircularGraphNOBOUND {1/0/\R, 2/120/\R, 3/240/\R}{1/2,2/3,1/3}
     };%
     \end{tikzpicture}%
     \captionof{figure}{The 3-cycles that have the same~$1$-decks.}\label{fig:3:cycles:with:equiv:t:decks}
    \end{center}
  \end{minipage}
  \hfill
  \begin{minipage}[t]{.40\textwidth}
    \begin{center}  
      \begin{tikzpicture}
      \node[graphcollection] (rnum)  at (0,0) {%
      \BoxedCircularGraph {1/0/\R, 2/90/\R, 3/180/\R, 4/270/\R}{1/2,2/3,3/4,4/1}\,\,
      \BoxedCircularGraph {1/0/\R, 2/90/\R, 3/180/\R, 4/270/\R}{1/2,3/2,3/4,1/4}
      };%
      \end{tikzpicture}
      \captionof{figure}{The 4-cycles that have the same decks.}\label{fig:4:cycles:with:equiv:t:decks}
    \end{center}
  \end{minipage}
  \hfill
\end{figure}

\begin{figure}
 \hfill
  \begin{minipage}[t]{.40\textwidth}
    \begin{center}  
     \begin{tikzpicture}
     \node[graphcollection] (rnum)  at (0,0) {%
     \BoxedCircularGraph {1/0/\R, 2/51/\R, 3/103/\R,4/154/\R,5/206/\R,6/257/\R,7/309/\R}{1/2,2/3,3/4,5/4,5/6,7/6,1/7} \,\,
     \BoxedCircularGraph {1/0/\R, 2/51/\R, 3/103/\R,4/154/\R,5/206/\R,6/257/\R,7/309/\R}{1/2,2/3,3/4,5/4,6/5,6/7,1/7}
     };%
     \end{tikzpicture}%
     \captionof{figure}{The 7-cycles that have the same~$1$-decks.}\label{fig:7:cycles:with:equiv:t:decks}
    \end{center}
  \end{minipage}
  \hfill
  \begin{minipage}[t]{.40\textwidth}
    \begin{center}  
      \begin{tikzpicture}
      \node[graphcollection] (rnum)  at (0,0) {%
      \BoxedCircularGraph {1/0/\R, 2/72/\R, 3/144/\R,4/216/\R,5/288/\R}{1/2,3/2,3/4,5/4,5/1}\,\,
      \BoxedCircularGraph {1/0/\R, 2/72/\R, 3/144/\R,4/216/\R,5/288/\R}{1/2,3/2,3/4,4/5,5/1}
      };%
      \end{tikzpicture}
      \captionof{figure}{The 5-cycles that have the same~$(-1)$-decks.}\label{fig:5:cycles:with:equiv:t:decks}
    \end{center}
  \end{minipage}
  \hfill
\end{figure}

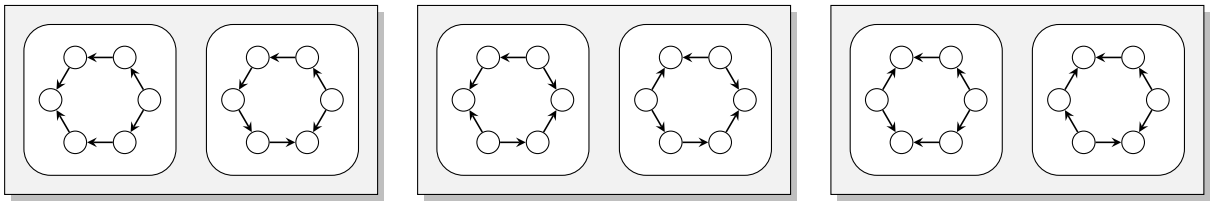
\begin{figure}
\centering
\subfloat 
{
\begin{tikzpicture}
\node[graphcollection] (rnum)  at (0,0) {%
\BoxedCircularGraph {1/0/\R, 2/60/\R, 3/120/\R,4/180/\R,5/240/\R,6/300/\R}{1/2,2/3,3/4,5/4,6/5,1/6} \,\,
\BoxedCircularGraph {1/0/\R, 2/60/\R, 3/120/\R,4/180/\R,5/240/\R,6/300/\R}{1/2,2/3,3/4,4/5,5/6,1/6} 
};%
\end{tikzpicture}%
} \quad 
\subfloat 
{
\begin{tikzpicture}
\node[graphcollection] (rnum)  at (0,0) {%
\BoxedCircularGraph {1/0/\R, 2/60/\R, 3/120/\R,4/180/\R,5/240/\R,6/300/\R}{2/1,2/3,3/4,5/4,5/6,6/1} \,\,
\BoxedCircularGraph {1/0/\R, 2/60/\R, 3/120/\R,4/180/\R,5/240/\R,6/300/\R}{2/1,2/3,4/3,4/5,5/6,6/1} 
};%
\end{tikzpicture}%
} \quad 
\subfloat 
{
\begin{tikzpicture}
\node[graphcollection] (rnum)  at (0,0) {%
\BoxedCircularGraph {1/0/\R, 2/60/\R, 3/120/\R,4/180/\R,5/240/\R,6/300/\R}{1/2,2/3,4/3,4/5,6/5,1/6} \,\,
\BoxedCircularGraph {1/0/\R, 2/60/\R, 3/120/\R,4/180/\R,5/240/\R,6/300/\R}{1/2,2/3,4/3,5/4,5/6,1/6} 
};%
\end{tikzpicture}%
}
\caption{The three families of 6-cycles which have the same~$2$-decks.}\label{fig:6:cycles:with:equiv:t:decks}
\end{figure}

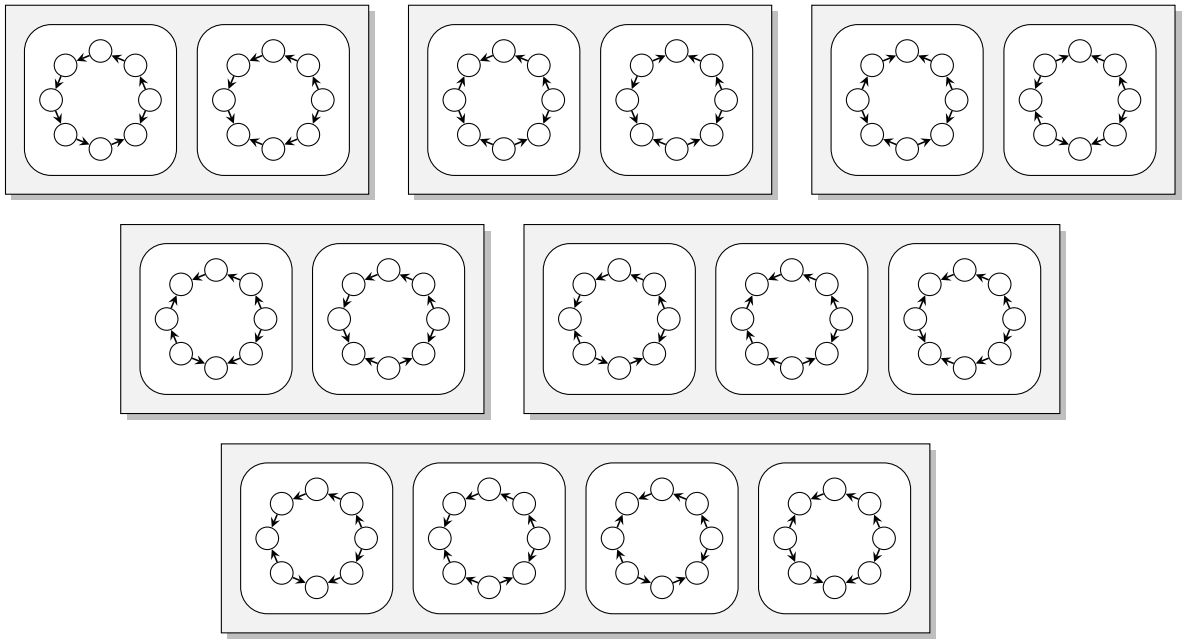
\begin{figure}
\centering
\subfloat 
{
\begin{tikzpicture}
\node[graphcollection] (rnum)  at (0,0) {%
\BoxedCircularGraph {1/0/\R, 2/45/\R, 3/90/\R,4/135/\R,5/180/\R,6/225/\R,7/270/\R,8/315/\R}{1/2,2/3,3/4,4/5,5/6,6/7,7/8,1/8}\,\,
\BoxedCircularGraph {1/0/\R, 2/45/\R, 3/90/\R,4/135/\R,5/180/\R,6/225/\R,7/270/\R,8/315/\R}{1/2,2/3,3/4,4/5,5/6,7/6,8/7,1/8}
};%
\end{tikzpicture}%
} \quad 
\subfloat 
{
\begin{tikzpicture}
\node[graphcollection] (rnum)  at (0,0) {%
\BoxedCircularGraph {1/0/\R, 2/45/\R, 3/90/\R,4/135/\R,5/180/\R,6/225/\R,7/270/\R,8/315/\R}{1/2,2/3,3/4,5/4,5/6,7/6,7/8,1/8}\,\,
\BoxedCircularGraph {1/0/\R, 2/45/\R, 3/90/\R,4/135/\R,5/180/\R,6/225/\R,7/270/\R,8/315/\R}{1/2,2/3,4/3,4/5,5/6,7/6,7/8,1/8}
};%
\end{tikzpicture}%
} \quad
\subfloat
{
\begin{tikzpicture}
\node[graphcollection] (rnum)  at (0,0) {%
\BoxedCircularGraph {1/0/\R, 2/45/\R, 3/90/\R,4/135/\R,5/180/\R,6/225/\R,7/270/\R,8/315/\R}{1/2,2/3,4/3,5/4,5/6,7/6,7/8,1/8}\,\,
\BoxedCircularGraph {1/0/\R, 2/45/\R, 3/90/\R,4/135/\R,5/180/\R,6/225/\R,7/270/\R,8/315/\R}{1/2,2/3,4/3,4/5,6/5,6/7,8/7,1/8}
};%
\end{tikzpicture}%
}

\subfloat 
{
\begin{tikzpicture}
\node[graphcollection] (rnum)  at (0,0) {%
\BoxedCircularGraph {1/0/\R, 2/45/\R, 3/90/\R,4/135/\R,5/180/\R,6/225/\R,7/270/\R,8/315/\R}{1/2,2/3,3/4,5/4,6/5,6/7,8/7,1/8}\,\,
\BoxedCircularGraph {1/0/\R, 2/45/\R, 3/90/\R,4/135/\R,5/180/\R,6/225/\R,7/270/\R,8/315/\R}{1/2,2/3,3/4,4/5,5/6,7/6,7/8,1/8}
};%
\end{tikzpicture}%
} \quad
\subfloat 
{
\begin{tikzpicture}
\node[graphcollection] (rnum)  at (0,0) {%
\BoxedCircularGraph {1/0/\R, 2/45/\R, 3/90/\R,4/135/\R,5/180/\R,6/225/\R,7/270/\R,8/315/\R}{1/2,2/3,3/4,4/5,6/5,6/7,7/8,1/8}\,\,
\BoxedCircularGraph {1/0/\R, 2/45/\R, 3/90/\R,4/135/\R,5/180/\R,6/225/\R,7/270/\R,8/315/\R}{1/2,2/3,3/4,5/4,6/5,7/6,7/8,1/8}\,\,
\BoxedCircularGraph {1/0/\R, 2/45/\R, 3/90/\R,4/135/\R,5/180/\R,6/225/\R,7/270/\R,8/315/\R}{1/2,2/3,3/4,5/4,5/6,7/6,8/7,1/8}
};%
\end{tikzpicture}%
} 

\subfloat 
{
\begin{tikzpicture}
\node[graphcollection] (rnum)  at (0,0) {%
\BoxedCircularGraph {1/0/\R, 2/45/\R, 3/90/\R,4/135/\R,5/180/\R,6/225/\R,7/270/\R,8/315/\R}{1/2,2/3,3/4,4/5,6/5,6/7,8/7,1/8}\,\,
\BoxedCircularGraph {1/0/\R, 2/45/\R, 3/90/\R,4/135/\R,5/180/\R,6/225/\R,7/270/\R,8/315/\R}{1/2,2/3,3/4,4/5,6/5,7/6,7/8,1/8}\,\,
\BoxedCircularGraph {1/0/\R, 2/45/\R, 3/90/\R,4/135/\R,5/180/\R,6/225/\R,7/270/\R,8/315/\R}{1/2,2/3,3/4,5/4,6/5,6/7,7/8,1/8}\,\,
\BoxedCircularGraph {1/0/\R, 2/45/\R, 3/90/\R,4/135/\R,5/180/\R,6/225/\R,7/270/\R,8/315/\R}{1/2,2/3,3/4,5/4,5/6,6/7,8/7,1/8}
};%
\end{tikzpicture}%
} \quad \quad

\caption{The six families of 8-cycles which have the same decks.}\label{fig:8:cycles:with:equiv:t:decks}
\end{figure}

\section{Disconnected graphs}\label{sec:disconnected}

In this section we consider disconnected digraphs. For a digraph~$C$ and a digraph~$G$ let~$n_C(G)$ be the number of components of~$G$ that are isomorphic to~$C$. We call two components of a digraph \emph{switching adjacent} if one can be obtained from the other by switching about a vertex.

\begin{lemma}\label{lem:non:adjacent}
If a digraph~$G$ contains at least two non-isomorphic switching adjacent components, then~$G$ is reconstructible.
\end{lemma}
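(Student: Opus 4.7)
The plan is to show that any digraph $H$ with $\mathcal{C}(H) = \mathcal{C}(G)$ has the same multiset of component isomorphism types as~$G$; since a digraph is determined by this multiset, this gives $H \cong G$. Write $\mathcal{M}(D)$ for the multiset $\{\<C\> : C \text{ a component of } D\}$. First, since switching preserves the underlying undirected graph, every card of~$G$ has the same underlying graph as~$G$, and hence so does~$H$; in particular $H$ has the same multiset of underlying connected components. Second, for any vertex~$v$ in a component~$C$ of a digraph~$D$ we have $\mathcal{M}(D_v) = \mathcal{M}(D) - \{\<C\>\} + \{\<C_v\>\}$, so each card's multiset differs from $\mathcal{M}(D)$ by at most one \emph{swap}, i.e.\ the removal of one element and the addition of another.

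The hypothesis supplies two distinguished cards. Taking $v_0 \in V(A)$ with $A_{v_0} \cong B$ gives $\mathcal{M}(G_{v_0}) = \mathcal{M}(G) - \{\<A\>\} + \{\<B\>\}$. Since $B \cong A_{v_0}$, choosing any isomorphism $\phi \colon A_{v_0} \to B$ and setting $w_0 = \phi(v_0) \in V(B)$ yields $B_{w_0} \cong A$, so $\mathcal{M}(G_{w_0}) = \mathcal{M}(G) - \{\<B\>\} + \{\<A\>\}$.

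For the main step, since $H$ has the same deck there exist vertices $v', w' \in V(H)$ with $H_{v'} \cong G_{v_0}$ and $H_{w'} \cong G_{w_0}$. Writing the swaps as $\mathcal{M}(H_{v'}) = \mathcal{M}(H) - \{P_1\} + \{Q_1\}$ and $\mathcal{M}(H_{w'}) = \mathcal{M}(H) - \{P_2\} + \{Q_2\}$ for certain types $P_i, Q_i$, and subtracting, we obtain the multiset identity
\[\{P_2, Q_1, \<A\>, \<A\>\} \;=\; \{P_1, Q_2, \<B\>, \<B\>\}.\]
Since $\<A\> \neq \<B\>$, counting occurrences of $\<A\>$ on the two sides gives $[P_2 = \<A\>] + [Q_1 = \<A\>] + 2 = [P_1 = \<A\>] + [Q_2 = \<A\>]$; as the left side is at least~$2$ and the right side at most~$2$, we conclude $P_1 = Q_2 = \<A\>$. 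The symmetric count for $\<B\>$ gives $P_2 = Q_1 = \<B\>$. Substituting back yields $\mathcal{M}(H) = \mathcal{M}(G_{v_0}) + \{P_1\} - \{Q_1\} = \mathcal{M}(G)$, as required.

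The main obstacle is handling degenerate swaps, i.e.\ the case $P_i = Q_i$ for some $i$, where switching at $v'$ or $w'$ leaves $\mathcal{M}(H)$ unchanged. In that case $\mathcal{M}(H)$ equals $\mathcal{M}(G_{v_0})$ or $\mathcal{M}(G_{w_0})$, and the remaining swap vector would have to equal $\pm 2(\{\<A\>\} - \{\<B\>\})$; since this is not a single-element swap, the case is impossible. The subcase where both swaps are trivial forces $\mathcal{M}(G_{v_0}) = \mathcal{M}(G_{w_0})$, contradicting $A \not\cong B$. Thus $\mathcal{M}(H) = \mathcal{M}(G)$ in all cases.
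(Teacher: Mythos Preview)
Your proof is correct. It takes a somewhat different, more algebraic route than the paper's. The paper argues as follows: since some card has at least two $A$-components (namely $G_{w_0}$), $A$ must be a component of any $H$ with the same deck; then $n_A(H)$ equals the maximum number of $A$-components appearing in any card, minus~$1$, and similarly for $n_B(H)$; finally one picks a single card $X$ with $n_A+1$ copies of $A$ and $n_B-1$ copies of $B$ and observes that both $G$ and $H$ are recovered from $X$ by the unique swap ``remove an $A$, add a $B$''. Your approach instead takes the two distinguished cards at once, encodes the unknown swaps on the $H$-side as $(P_i,Q_i)$, and solves the resulting four-element multiset equation directly. The paper's argument is more constructive (it actually names deck-invariants such as $n_A$), while yours is slicker and avoids the intermediate step of showing $A,B$ are definite components.

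One remark: your final paragraph on ``degenerate swaps'' is unnecessary. Your main multiset argument never assumes $P_i\ne Q_i$; from
\[
\{P_2, Q_1, \langle A\rangle, \langle A\rangle\} \;=\; \{P_1, Q_2, \langle B\rangle, \langle B\rangle\}
\]
you already deduce $P_1=\langle A\rangle$, $Q_1=\langle B\rangle$, $P_2=\langle B\rangle$, $Q_2=\langle A\rangle$ unconditionally, so in particular both swaps are automatically non-degenerate. You can simply delete that paragraph.
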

\begin{proof}
Suppose~$G$ contains the non-isomorphic switching adjacent components~$A$ and~$B$. Let~$H$ be a digraph that has the same deck as~$G$. Since the deck contains a card that has at least two~$A$ components, one of the components of~$H$ is~$A$. By symmetry~$B$ is also a component of~$H$. Thus, the number~$n_A(H)$ of~$A$ components of~$H$ is the maximum number of~$A$ components appearing in a card minus 1 and thus depends only on the deck. By symmetry the number~$n_B(H)$ of~$B$ components in~$H$ also only depends on the deck. There is a card~$X$ with~$n_A(H) +1$ components isomorphic to~$A$ and~$n_B(H) -1$ components isomorphic to~$B$. The graph~$H$, and thus also the graph~$G$, must be isomorphic to the graph obtained from the card~$X$  by replacing a component isomorphic to~$A$ with a component isomorphic to~$B$.
\end{proof}

\begin{lemma}\label{lem:non:stable:more:than:2:comp}
If a digraph~$G$ contains at least three components of which at least two are not switching-stable, then~$G$ is reconstructible.
\end{lemma}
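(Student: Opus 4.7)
The plan is to assume $H$ has the same deck as $G$ and prove $H\cong G$. First I would apply Lemma~\ref{lem:non:adjacent} to reduce to the case where neither $G$ nor $H$ contains two non-isomorphic switching-adjacent components, so that for every component $C$ of $G$ and every $v\in V(C)$, either $C_v\cong C$ or $C_v$ is not isomorphic to any component of $G$ (and similarly for $H$). Since switching preserves the underlying undirected graph of each component and therefore its connectedness, the number of components $k$ is deck-invariant, so $H$ also has $k\ge 3$ components. Next, for any type $C$ with $n_C(G)\ge 2$, the non-adjacency reduction forces $n_C(G_v)=n_C(G)$ whenever $v$ lies outside a $C$-component (such $v$ exists unless $G=C^k$, which I would handle by a direct argument), whereas for a type $C$ not occurring in $G$ one has $n_C(G_v)\le 1$; thus $\max_v n_C(G_v)$ recovers the multiset $\mathcal{B}$ of components of multiplicity $\ge 2$, and this multiset matches between $G$ and $H$.

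Now write $\mathcal{T}(G)=\mathcal{B}\cup S\cup\{X_1,\ldots,X_r\}$ and $\mathcal{T}(H)=\mathcal{B}\cup S\cup\{Y_1,\ldots,Y_r\}$, where $S$ is the common multiplicity-1 part and the $X_i,Y_j$ are pairwise distinct; it suffices to show $r=0$. The key observation is that every card of $H$ is obtained from $\mathcal{T}(H)$ by swapping one component for another, so its multiset has symmetric difference at most $2$ with $\mathcal{T}(H)$; hence any multiset at symmetric distance greater than $2$ from $\mathcal{T}(H)$ occurs in neither $H$'s nor (by deck matching) $G$'s deck.

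Assuming $r\ge 1$ for contradiction, I would split into two cases. In the case $r\ge 3$, the multiset $\mathcal{T}(G)-X_1+Y_1$ has symmetric distance $2(r-1)\ge 4$ from $\mathcal{T}(H)$ and is therefore forbidden; matching then forces $X_{i,v}\not\cong Y_j$ for all $i,j$ and $v\in V(X_i)$, and combined with the non-adjacency reduction applied to $H$ (which prevents $\mathcal{B}\cup S$-components from switching to an $H$-component $Y_j$), this implies $Y_j$ never appears in $G$'s deck, contradicting that $Y_j$ appears in at least $n-|Y_j|\ge k-1\ge 2$ cards of $H$'s deck. In the case $r\in\{1,2\}$, consider a card obtained from an unstable vertex in any $C\in\mathcal{B}\cup S$: its derivative $C_v$ is fresh by the non-adjacency reduction for both $G$ and $H$, and the card differs from $\mathcal{T}(H)$ by $2+2r\ge 4$ elements, hence is forbidden. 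Therefore every $\mathcal{B}\cup S$-component must be switching-stable. For $r=1$ this leaves $X_1$ as the sole unstable component of $G$, contradicting the hypothesis of at least two unstable components. For $r=2$, the multiset $\mathcal{T}(G)$ itself has symmetric distance $4$ from $\mathcal{T}(H)$ and so is forbidden, giving that the number of stable vertices of $G$ is zero; but then the total number of vertices in the (now all stable) $\mathcal{B}\cup S$-components is zero, forcing $\mathcal{B}\cup S=\emptyset$, which together with $|\{X_1,X_2\}|=2$ contradicts $k\ge 3$.

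The main obstacle is the symmetric-difference bookkeeping across the case analysis on $r$; once the non-adjacency reduction is set up and one classifies which card multisets are single-swap-accessible from $\mathcal{T}(H)$, the contradictions fall out mechanically.
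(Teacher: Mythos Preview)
Your approach is correct and would go through, but it is considerably heavier than the paper's argument. After the same non-adjacency reduction, the paper proceeds in essentially three lines: pick two non-switching-stable components $A,B$ of~$G$; since there is a third component, some card still contains both $A$ and $B$ unaltered, and as this card equals some $H_w$, at least one of $A,B$---say~$A$---is a component of~$H$. Now take a card $X$ with the \emph{fewest} copies of~$A$. Because $A$ is not switching-stable and (by non-adjacency) no other component type of $G$ or $H$ can switch into~$A$, such a minimum card arises, for \emph{both} $G$ and $H$, precisely by switching inside an $A$-copy; moreover the switched component is the unique component of $X$ that is non-isomorphic to $A$ yet switching-adjacent to~$A$. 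Replacing that one component by~$A$ reconstructs both $G$ and $H$, whence $G\cong H$.

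The contrast with your proof is instructive. You invest effort in recovering the entire multiset $\mathcal{B}$ of high-multiplicity components and then run a three-way case split on the discrepancy count~$r$, using symmetric-difference distance to~$\mathcal{T}(H)$ as an obstruction. This machinery is sound, and the ``cards are at multiset distance~$\le 2$'' observation is a nice organising principle that could be useful elsewhere. But for this particular lemma the paper shows that one never needs to match all component types: locating a \emph{single} shared non-stable type~$A$ and examining one extremal card suffices. Your route also leaves a small gap: the case $G=C^{k}$ is deferred to ``a direct argument'', but the sub-case where $C_v\not\cong C$ for every $v$ (so that $\max_v n_C(G_v)=k-1$ rather than~$k$) is not entirely immediate and deserves more than a parenthetical; it needs a short count comparing how many cards of $G$ versus $H$ have exactly $k-1$ copies of~$C$.
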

\begin{proof}
Let $H$ be a digraph that has the same deck as~$G$. By the previous lemma we may assume that all non-isomorphic components of~$G$ are not switching adjacent, and similarly for~$H$.
Let~$A$ and~$B$ be components of $G$ that are not switching-stable. Then there is card containing both of them, so at least one, say $A$, is a component of~$H$.
Consider a card $X$ with the fewest components isomorphic to~$A$.
Since $A$ is not switching-stable, this card corresponds to switching $G$ about a vertex in $A$, and similarly for~$H$. Since $A$ is not switching adjacent to any component of~$G$ or~$H$ that is not isomorphic to~$A$, both $G$ and $H$ are the graph obtained by replacing the unique component of $X$ switching adjacent to $A$ by a copy of~$A$.  That is, $G$ and $H$ are isomorphic.
\end{proof}

For a digraph~$G$, a \emph{possible component} is a connected digraph that appears as some component in some digraph~$H$ that has the same deck as~$G$. 
A \emph{definite component} is a connected digraph that appears as a component in every digraph that has the same deck as~$G$.

\begin{lemma}\label{lem:non:stable:exactly:2:comp}
If a digraph~$G$ contains exactly two components of different sizes of which the smaller one is not switching-stable, then~$G$ is reconstructible.
\end{lemma}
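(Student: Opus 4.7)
The plan is to take an arbitrary digraph $H$ with $\mathcal{C}(H)=\mathcal{C}(G)$, where $G = A\dunion B$ with $|V(A)|<|V(B)|$ and $A$ not switching-stable, and to show $H\cong G$. First I would observe that switching about a vertex does not change the underlying undirected graph, so the vertex-partition into components is preserved in every card. Consequently each card has exactly two components, of sizes $|V(A)|$ and $|V(B)|$, and since cards of $H$ have the same property, $H$ itself must split as $H = A'\dunion B'$ with $|V(A')| = |V(A)|$ and $|V(B')| = |V(B)|$.

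Next I would identify $A$ by counting isomorphism classes that occur as the small (size $|V(A)|$) component of a card. In $\mathcal{C}(G)$ the small component equals $A$ in all $|V(B)|$ cards obtained by switching a vertex of $B$, whereas any class other than $\langle A\rangle$ appears as a small component only when a vertex of $A$ is switched, hence at most $|V(A)|$ times. The assumption $|V(B)|>|V(A)|$ then makes $\langle A\rangle$ strictly the most frequent small-component class in $\mathcal{C}(G)$, and the identical argument applied to $\mathcal{C}(H)$ singles out $\langle A'\rangle$. Equality of decks forces $A'\cong A$. The crux here, and what I expect to be the main obstacle, is the strict inequality: the size gap $|V(B)|>|V(A)|$ is exactly what lets $\langle A\rangle$ dominate regardless of how many vertices of $A$ happen to preserve the isomorphism type of $A$ under switching.

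Finally I would use the non-stability of $A$: pick $v_0\in V(A)$ with $A^*:=A_{v_0}\not\cong A$, so the card $\langle A^*\dunion B\rangle$ lies in the common deck. Hence $H_w\cong A^*\dunion B$ for some $w\in V(H)$. If $w\in V(B')$ then $H_w = A'\dunion B'_w$ and its small component would be $A'\cong A$, contradicting its isomorphism to $A^*$. Therefore $w\in V(A')$, giving $H_w = A'_w\dunion B'$, which forces $B'\cong B$. Together with $A'\cong A$ this yields $H\cong G$, proving reconstructibility.
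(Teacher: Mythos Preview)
Your proof is correct and follows essentially the same approach as the paper's: both identify $A$ as a definite component via the frequency of its appearance as the small component in the deck (the paper phrases this as ``$A$ appears in more than half the cards''), and then use the existence of a card whose small component is some $A^*\not\cong A$ (guaranteed by non-stability of $A$) to pin down $B$ as the large component of any $H$ with the same deck. Your write-up is simply more explicit about the counting and about why $H$ must itself decompose into components of the correct sizes.
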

\begin{proof}
Suppose the components are~$A$ and~$B$ and~$A$ is of smaller size. Then~$A$ appears in more than half the cards and is thus definite.
Since~$A$ is not switching-stable, there is a card~$CB$ with~$C$ non-isomorphic to~$A$. Since~$A$ is definite and~$B$ cannot be a switching of~$A$ this implies that~$B$ is definite.
\end{proof}

\begin{lemma}\label{lem:comp:sam:size:switching:stable:set}
If~$G$ is a digraph that consists of two components with the same number of vertices then~$G$ is reconstructible or the possible components form a switching-stable set of size at most~4.
\end{lemma}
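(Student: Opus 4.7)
The plan is to bound $|P|$ by a direct counting argument and then establish switching-stability through a dichotomy on whether the ``other'' component of a chosen $H \in \mathcal{H}$ appears as a component of every member of $\mathcal{H}$. Write $G = A_1 \cup A_2$ with $|V(A_i)|=m$ and let $\mathcal{H}$ denote the set of digraphs with the same deck as $G$. Because switching preserves the underlying undirected graph and hence connectedness, every card has component-size multiset $\{m,m\}$, and so must every $H \in \mathcal{H}$.

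For the cardinality bound, for each $C \in P$ let $n_C$ denote the total number of components isomorphic to $C$ appearing across all cards of the deck (counted with multiplicity within each card). A direct calculation gives
\[
n_C = m\bigl(\mathbf{1}[B_1 \cong C] + \mathbf{1}[B_2 \cong C]\bigr) + \bigl|\{v \in V(B_1) : (B_1)_v \cong C\}\bigr| + \bigl|\{v \in V(B_2) : (B_2)_v \cong C\}\bigr|
\]
for every $H = B_1 \cup B_2 \in \mathcal{H}$. Picking $H$ with $C$ as a component yields $n_C \ge m$, and since $\sum_C n_C$ equals twice the deck size, summing over $C \in P$ gives $|P|\cdot m \le 4m$, forcing $|P| \le 4$.

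For switching-stability, fix $C \in P$ and $v \in V(C)$ and choose some $H = C \cup D \in \mathcal{H}$; the card $\{\langle(C)_v\rangle, \langle D\rangle\}$ then appears in the common deck. In Subcase (i), some $H' = B'_1 \cup B'_2 \in \mathcal{H}$ has $D \not\cong B'_i$ for both $i$. Every card of $H'$ has one component equal to the unchanged component of the switching that produced it, so matching $\{\langle(C)_v\rangle, \langle D\rangle\}$ against a card of $H'$ and using $\langle D\rangle \neq \langle B'_i\rangle$ forces $\langle(C)_v\rangle = \langle B'_i\rangle$ for some $i$, placing $(C)_v$ in $P$. In Subcase (ii), $D$ is a component of every element of $\mathcal{H}$. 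Writing $G = D \cup A$ and choosing any $H = D \cup A' \in \mathcal{H}$ with $A' \not\cong A$, the counting identity applied to $C = A'$ and to $C = A$ forces $|\{v : (A)_v \cong A'\}| = m$ and $|\{v : (A')_v \cong A\}| = m$, so $A$ and $A'$ switch exclusively to each other, and comparing the multiplicity of the card $\{\langle D\rangle, \langle A\rangle\}$ in the decks of $G$ and $H$ then pins down $D$ as switching-stable. Any third $H'' = D \cup A'' \in \mathcal{H}$ with $A'' \not\cong A, A'$ would, by the same argument, force $(A)_v \cong A''$, contradicting $(A)_v \cong A'$; hence $\mathcal{H} = \{G, H\}$ and $P = \{D, A, A'\}$, each element of which has its switchings in $P$.

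The main subtlety is the degenerate case $A \cong D$ of Subcase (ii) (that is, $G$ having two isomorphic components), where the two indicator terms in the counting formula combine into a factor of $2$. A short direct computation shows that in this case the $n_D$ identity together with the count of the multiplicity of the card $\{\langle D\rangle, \langle D\rangle\}$ are jointly inconsistent, so $D$ cannot in fact be definite when $G$ has two isomorphic components; Subcase (i) therefore applies and yields $(C)_v \in P$ as before.
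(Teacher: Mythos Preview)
Your argument is correct and organises the proof differently from the paper. Both begin from the same counting observation—each possible component contributes at least $m$ to $\sum_C n_C=4m$, whence $|P|\le 4$—but then diverge. The paper first invokes the earlier lemma that two non-isomorphic switching-adjacent components force reconstructibility, and then performs a four-way case split on $|P|\in\{1,2,3,4\}$, handling each size by separate ad hoc reasoning about which components are definite or adjacent. You bypass that lemma entirely and replace the case split with a single dichotomy on whether the companion component $D$ of a chosen $H=C\cup D$ is definite: if not, matching the card $(C)_v\cup D$ against any witness $H'$ with no $D$-component forces $(C)_v$ to coincide with an unchanged component of $H'$, hence $(C)_v\in P$; if so, the $n_C$ identities together with one card-multiplicity comparison determine $P=\{D,A,A'\}$ and show it switching-stable. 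Your route is more uniform and self-contained; the paper's is shorter in each individual case but relies on the preparatory lemma.

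Two spots in your write-up are terse enough to merit an extra sentence. First, in Subcase~(ii) you should note explicitly that the multiplicity comparison also excludes $D\cong A'$: in that sub-case the card $\langle D\cup A\rangle$ has multiplicity $0$ in the deck of $G$ but $2m$ in that of $H$, a contradiction, so the comparison really does yield $|\{v:D_v\cong D\}|=m$. Second, the choice of $A'\not\cong A$ in Subcase~(ii) presupposes that $G$ is non-reconstructible; this is of course the standing hypothesis once the first alternative of the lemma fails, but it should be stated before entering the dichotomy.
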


\begin{proof}
Suppose~$G = A+B$. By Lemma~\ref{lem:non:adjacent} we may assume that~$A$ and~$B$ are not simultaneously non-isomorphic and switching adjacent.

We observe that every possible component appears at least~$n/2$ times in the deck. Moreover every switching of a possible component is in the deck.
We distinguish cases according to the number of possible components of~$G$.

\emph{4 possible components:} If~$G$ has exactly 4 non-isomorphic possible components, then they form a switching-stable set.

\emph{3 possible components:}  If~$G$ has exactly 3 non-isomorphic  possible components then two of them must be adjacent. If there is a reconstruction that has two isomorphic components, then this component appears~$n$ times in the deck. The other two possible components appear~$n/2$ times, so the three components form a switching-stable set.
Suppose now~$G$ does not have isomorphic components. Since~$G$ does not have adjacent components and does not have isomorphic components, it is not the case that all 3 possible components are adjacent. If one of the~3 possible components is adjacent to both other components, then~$G$ must be the graph that consists of the non-adjacent components. Otherwise suppose~$A$ and~$B$ are adjacent and~$C$ is not adjacent to either of them. Since~$A$ and~$B$ both appear at least~$n/2$ times and~$C$ cannot switch to~$A$ or~$B$,~$A$ must always switch to~$B$ and~$B$ must always switch to~$A$. Thus if~$A$ appears in the deck then~$G= B+C$. Otherwise~$G= A+C$. In any case~$G$ is reconstructible.

\emph{2 possible components:} If~$G$ has exactly 2 non-isomorphic  possible components~$A$ and~$B$, then~$G$ is isomorphic to\rev{P13/15} $A+B$ if they are not adjacent. We can thus assume that~$G\neq A+B$ by Lemma~\ref{lem:non:adjacent}. Thus,~$G=A+A$ or~$G=B+B$. 
Either the set~$\{A,B\}$ is switching-stable or the deck contains a card of the form~$A+C$ or a card of the form~$B+C$ with~$C\notin\{A,B\}$, implying that one of the components~$A$ or~$B$ is definite.

\emph{1 possible component:} If~$G$ has exactly 1 non-isomorphic  possible component then~$G$ is isomorphic to the disjoint union of this component with itself.
\end{proof}

\begin{theorem}
If~$G$ and~$H$ are disconnected non-isomorphic digraphs with the same deck, then
\begin{enumerate}
\item $G$ and~$H$ each have exactly two components, and the possible components of~$G$ and~$H$ are of the same size and form a switching-stable set of size at most 4, or \label{thm:item:switchingstable}
\item $G$ and~$H$ each have\rev{P13/T4} exactly one component that is not switching-stable, and these two components have the same~$t$-deck for some~$t\in \mathbb{N}$.
\end{enumerate}
\end{theorem}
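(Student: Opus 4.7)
The plan is to split based on the number of components. Switching preserves the underlying undirected graph and hence the (weakly connected) components, so every card of a digraph has the same number of components as its parent; since $G$ and $H$ have the same deck, they share a common component count $g$. I handle $g\ge 3$ and $g=2$ separately (the case $g=1$ is excluded by the hypothesis that $G$ and $H$ are disconnected).

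\textbf{Case $g\ge 3$.} Lemma~\ref{lem:non:stable:more:than:2:comp}, applied to each of $G$ and $H$, implies that each has at most one non-switching-stable component. Neither $G$ nor $H$ can have all components switching-stable, otherwise that graph would itself be switching-stable and its deck would consist of $n$ copies of its own isomorphism class, forcing $G\cong H$. So $G$ has exactly one non-switching-stable component $A$ and $H$ has exactly one such component $A'$. The key observation is that whenever $A_v\not\cong A$ the component $A_v$ is itself non-switching-stable, since otherwise $(A_v)_v=A$ would be a switching of the stable graph $A_v$ and hence isomorphic to it. Consequently every card of $G$ decomposes canonically as $S+X$ with $S:=G\setminus A$ a fixed switching-stable digraph and $X$ the unique non-switching-stable component (equal to $A_v$ when $v\in A$ and to $A$ otherwise); likewise every card of $H$ decomposes as $S'+X'$ with $S':=H\setminus A'$. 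Matching switching-stable parts across the two decks forces $S\cong S'$, hence $|A|=|A'|=:a$, and matching non-switching-stable parts identifies the $(n-a)$-deck of $A$ with the $(n-a)$-deck of $A'$. This yields case~(2) with $t=n-a$.

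\textbf{Case $g=2$.} Write $G=A+B$ with $|A|\le|B|$. If $|A|=|B|$, then every card of $G$ has both components on exactly $n/2$ vertices, and so must every card of $H$; Lemma~\ref{lem:comp:sam:size:switching:stable:set} together with the non-reconstructibility of $G$ then yields case~(1). If $|A|<|B|$, Lemma~\ref{lem:non:stable:exactly:2:comp} forces $A$ to be switching-stable, and $B$ must then be non-switching-stable (else $G$ itself is switching-stable and $H\cong G$). Since $A$ is stable, every card of $G$, hence of $H$, contains a copy of $A$ as a component; writing $H=C+D$ with $|C|\le|D|$, if $C\not\cong A$ then switching any $v\in D$ would force $D_v\cong A$, which by switching-stability of $A$ gives $D\cong A$, hence $|D|=|A|<n/2$, contradicting $|C|\le|D|$. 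So $C\cong A$ and $H=A+D$ with $|D|=|B|$. Stripping the common $A$ component from every card identifies the $|A|$-deck of $B$ with the $|A|$-deck of $D$; here $B\not\cong D$ (else $G\cong H$) and both of $B,D$ are non-switching-stable (if $D$ were stable then so would be $H$, making its deck constant and contradicting the non-constant deck of $G$). This is case~(2) with $t=|A|\ge 1$.

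The main obstacle I anticipate is in the $g\ge 3$ case: establishing that each card has a \emph{unique} non-switching-stable component so that the canonical $S+X$ decomposition is well defined, and then translating the equality of the stripped multisets into matching $t$-decks of $A$ and $A'$ with the correct common value of $t$. The auxiliary fact that a switching of a non-switching-stable component is either isomorphic to the original or itself non-switching-stable is the combinatorial linchpin that makes the decomposition unambiguous and thus makes the stripping procedure work.
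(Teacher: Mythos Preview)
Your proof is correct and follows essentially the same route as the paper: split by the number of components, invoke Lemmas~\ref{lem:non:stable:more:than:2:comp}, \ref{lem:non:stable:exactly:2:comp}, and~\ref{lem:comp:sam:size:switching:stable:set} in the appropriate cases, and reduce option~(2) to stripping off the switching-stable part of each card. Your write-up is in fact more careful than the paper's in two places: you make explicit the observation that a single switching of a non-switching-stable connected digraph is again non-switching-stable (this is what guarantees the $S+X$ decomposition of every card is well defined; the paper uses it tacitly in its first paragraph), and in the two-component unequal-size case you spell out why $H$ must also have the form $A+D$ and why $D$ is non-switching-stable, whereas the paper simply gestures at Lemma~\ref{lem:non:stable:exactly:2:comp}.
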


\begin{proof}
We first argue that if~$G$ and~$H$ both have exactly one component that is not switching-stable then
the two non-switching-stable components in~$G$ and~$H$ have the same~$t$-deck for some~$t\in \mathbb{N}$. The number of switching-stable components of each isomorphism type in every card of a deck is invariant over all cards and the same for the original graph. Let~$t$ be the number of vertices contained in switching-stable components of~$G$. This implies that~$t$ is also the number of vertices contained in switching-stable components of~$H$. The multi-set of non-switching-stable components in cards of~$G$ is equal to the~$t$-deck of~$G$. The analogous statements holds for~$H$ which shows that the two non-switching-stable components in~$G$ and~$H$ have the same~$t$-deck.

If~$G$ and~$H$ have more than 3 components then by Lemma~\ref{lem:non:stable:more:than:2:comp} they both have exactly one component that is not switching-stable, showing that the second option holds.

If~$G$ and~$H$ have 2 components, then either by Lemma~\ref{lem:non:stable:exactly:2:comp} the second option holds or by Lemma~\ref{lem:comp:sam:size:switching:stable:set}  the possible components  of~$G$ form a switching-stable set of size at most~$4$. By definition, all components of~$H$ are possible components of~$G$. 
\end{proof}

We now describe all oriented graphs with maximum degree at most~2 that satisfy Property~\ref{thm:item:switchingstable} from the previous theorem. Let~$G$ and~$H$ be a pair of such oriented graphs. Let~$c$ be the size of a largest possible component of~$G$ then by Corollary~\ref{cor:switching:stable:sets:max:deg:2} we have~$2^{c-1} \leq 4 \cdot  c\cdot 2$, which implies~$c\leq 6$. 

Note that every oriented path on~$n$ vertices can be switched into every other oriented path on~$n$ vertices. Since there are more than~$4$ oriented paths on 5 vertices, an oriented path contained in a switching-stable set of size at most~$4$ has at most 4 vertices. Figure~\ref{fig:unioins:of:paths} shows the unions of oriented paths that have Property~\ref{thm:item:switchingstable} from the previous theorem.

\begin{figure}
 \hfill
  \begin{minipage}[t]{.45\textwidth}
    \begin{center}  
     \begin{tikzpicture}
     \node[graphcollection] (rnum)  at (0,0) {%
     \BoxedGraph {1/0/0, 2/.75/0, 3/1.5/0, 4/2.25/0, 5/0/0.75, 6/0.75/.75, 7/1.5/0.75, 8/2.25/0.75}{1/2,2/3,3/4,5/6,7/6,7/8} \,\,
     \BoxedGraph {1/0/0, 2/.75/0, 3/1.5/0, 4/2.25/0, 5/0/0.75, 6/0.75/.75, 7/1.5/.75, 8/2.25/.75}{1/2,3/2,4/3,6/5,7/6,7/8}
     };%
     \end{tikzpicture}%
     \captionof{figure}{Disjoint unions of paths which have the same decks.}\label{fig:unioins:of:paths}
    \end{center}
  \end{minipage}
  \hfill
  \begin{minipage}[t]{.45\textwidth}
    \begin{center}  
      \begin{tikzpicture}
      \node[graphcollection] (rnum)  at (0,0) {%
      \BoxedGraph {1/0/0, 2/0.75/0, 3/0.75/0.75, 4/0/0.75, 5/1.5/0, 6/2.25/0, 7/2.25/0.75, 8/1.5/0.75}{1/2,2/3,3/4,4/1,5/6,7/6,7/8,5/8}\,\,
      \BoxedGraph {1/0/0, 2/0.75/0, 3/0.75/0.75, 4/0/0.75, 5/1.5/0, 6/2.25/0, 7/2.25/0.75, 8/1.5/0.75}{1/2,2/3,4/3,1/4,5/6,6/7,8/7,5/8}
      };%
      \end{tikzpicture}
      \captionof{figure}{Disjoint unions of cycles which have the same decks.}\label{fig:unioins:of:cycles}
    \end{center}
  \end{minipage}
  \hfill
\end{figure}

Checking which pairs of cycles of length up to 6 are non-reconstructible can be done by hand or by a computer search. Figures~\ref{fig:unioins:of:paths} and~\ref{fig:unioins:of:cycles} show all pairs of oriented graphs of maximum degree at most 2 that satisfy Property~\ref{thm:item:switchingstable} from the previous theorem. 

\section{Non-reconstructible graphs of maximum degree at most 2}\label{sec:assemble:all:max:degree:2}

We assemble the results from the previous sections to characterize all non-reconstructible graphs with an underlying graph of maximum degree at most 2.

\begin{lemma} Let~$G$ and~$H$ be two digraphs. Let~$G' \dunion S_G$ and~$H' \dunion S_H$ be the decomposition of the graphs obtained by splitting off the parts~$S_G$ and~$S_H$ that contain all switching-stable connected components. Then~$G$ and~$H$ have the same deck if and only if~$S_G$ and~$S_H$ are isomorphic graphs on~$t$ vertices say, and~$G'$ and~$H'$ have the same~$t$-deck.
\end{lemma}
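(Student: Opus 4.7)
The plan is to prove both directions of the ``if and only if'' separately. The sufficiency direction is a direct computation. Every vertex in $V(S_G)$ lies in a switching-stable component, so switching it leaves the isomorphism type of $G$ unchanged and contributes exactly $t$ copies of $\langle G\rangle$ to $\mathcal{C}(G)$. Every vertex $v\in V(G')$ lies in some non-stable component, so switching yields the card $\langle (G')_v \dunion S_G\rangle$. Hence $\mathcal{C}(G)$ consists of $t$ copies of $\langle G' \dunion S_G\rangle$ together with $\{\langle (G')_v\dunion S_G\rangle : v\in V(G')\}$, and the analogous formula holds for $H$. Under the hypotheses $S_G\cong S_H$ and equality of the $t$-decks of $G'$ and $H'$, these two deck descriptions agree term by term.

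For the necessity direction the central goal is to recover both $S_G$ (up to isomorphism) and the $t$-deck of $G'$ from $\mathcal{C}(G)$ alone, since the same procedure applied to $\mathcal{C}(H)=\mathcal{C}(G)$ must then yield $S_H$ and the $t$-deck of $H'$. The key claim I would prove is: for every switching-stable connected digraph $C$, the multiplicity of $C$ among the components of $K$ is the same for every card $K\in\mathcal{C}(G)$, and equals the multiplicity of $C$ in $S_G$. Granted this, the multiset $S_G$ is a function of $\mathcal{C}(G)$, so $S_G\cong S_H$ and in particular $t$ is common to both decompositions.

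The heart of the argument, and the main obstacle, is the sub-claim that no non-stable component $B$ of $G$ can switch to something isomorphic to a switching-stable digraph $C$: if $B_v\cong C$ for some $v\in V(B)$, then $(B_v)_v\cong B_v$ by switching-stability of $C$, but $(B_v)_v = B$, forcing $B\cong C$ and contradicting non-stability of $B$. Combined with the trivial observations that switching a vertex inside a stable component preserves its isomorphism type and that switching $v\in B$ replaces $B$ by $B_v$ while leaving all other components untouched, a short case split on which component contains the switched vertex yields the key claim.

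Having established $S_G\cong S_H$, I form a ``reduced deck'' by multiset-subtracting $S_G$ from the component multiset of each card; by the key claim this subtraction is always well-defined. A direct check shows that the reduced card corresponding to $v\in V(S_G)$ is $G'$ (and these appear $t$ times in total), while the reduced card for $v\in V(G')$ with $v\in B$ is $(G')_v$. Thus the reduced deck of $G$ is precisely the $t$-deck of $G'$. Since the reduction depends only on $\mathcal{C}(G)=\mathcal{C}(H)$ and on the common multiset $S_G=S_H$, applying it to $\mathcal{C}(H)$ also yields the $t$-deck of $H'$, so the two $t$-decks coincide.
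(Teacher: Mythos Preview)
Your proof is correct and follows essentially the same approach as the paper's one-line proof, which simply states that the $t$-decks of $G'$ and $H'$ are obtained by removing all switching-stable components from each card in the decks of $G$ and $H$. You have carefully spelled out the justification the paper leaves implicit---in particular, the sub-claim that a non-switching-stable component cannot switch to a switching-stable one (via the involution $(B_v)_v=B$), which is exactly what makes the ``remove all switching-stable components'' operation behave as claimed on every card.
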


\begin{proof}
The lemma follows directly from the observation that the~$t$-decks of~$G'$ and~$H'$ are obtained by removing all switching-stable components in each card in the deck of~$G$ and~$H$, respectively. 
\end{proof}

\begin{theorem}
The pairs~$\{G,H\}$ of non-isomorphic oriented graphs with  maximum degree at most 2 which have the same deck are exactly the following.

\begin{enumerate}\itemsep=0pt
\item The oriented graphs obtained from paths in Figure~\ref{fig:paths} (a) by adding an isolated vertex.

\item Two oriented paths on 4 vertices that are in one of the two families in
 Figure~\ref{fig:paths} (b) and~(c).

\item The oriented graphs obtained from the two 3-cycles shown in Figure~\ref{fig:3:cycles:with:equiv:t:decks} by adding an isolated vertices.

\item The two oriented  4-cycles in Figure~\ref{fig:4:cycles:with:equiv:t:decks}.

\item The oriented graphs obtained from the two 6-cycles that are in one of the three pairs in Figure~\ref{fig:6:cycles:with:equiv:t:decks} by adding either two isolated vertices or an isolated edge.

\item The oriented graphs obtained from the two 7-cycles shown in Figure~\ref{fig:7:cycles:with:equiv:t:decks} by adding an isolated vertex.

\item Two oriented 8-cycles that are in one of the six families of Figure~\ref{fig:8:cycles:with:equiv:t:decks}.

\item The oriented forests shown in Figure~\ref{fig:unioins:of:paths}.

\item The unions of oriented cycles shown in Figure~\ref{fig:unioins:of:cycles}.
\end{enumerate}

In summary, on 4 vertices there are 4 families of size 2 and 1 family of
size 3.  On 8 vertices there are 13 families of size 2, 1 family of size 3,
and 1 family of size 4.

\end{theorem}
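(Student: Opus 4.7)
The plan is to apply the preceding splitting lemma to reduce the classification to pairs of non-switching-stable parts sharing a $t$-deck, and then to combine the connected and disconnected sub-cases using the earlier results on paths, cycles, and disconnected graphs.

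Given non-isomorphic oriented $G$ and $H$ of maximum degree at most $2$ sharing a deck, write $G = G' \dunion S_G$ and $H = H' \dunion S_H$ via the splitting lemma, so $S_G \cong S_H$ has some $t \geq 0$ vertices and $G'$, $H'$ share a common $t$-deck and contain no switching-stable components. By Theorem~\ref{sstable}, switching-stable oriented graphs of maximum degree at most $2$ are disjoint unions of isolated vertices, isolated edges, and copies of the special $4$-cycle shown in Figure~\ref{fig:the:4:cycle:B}; in particular, for each $t \in \{0, 1, 2\}$ that will arise, only a handful of choices for $(S_G, S_H)$ are admissible (nothing; an isolated vertex; two isolated vertices or one isolated edge).

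If $G'$ is connected, then so is $H'$: switching preserves the underlying undirected graph and hence the number of components on every card of the $t$-deck, and the $t$ extra copies of $\langle G'\rangle$ also have the same number of components as $G'$. Thus both $G'$ and $H'$ are non-switching-stable oriented paths or cycles. The paths theorem of Section~\ref{sec:paths} restricts candidate path pairs to those in Figure~\ref{fig:paths}, and Theorem~\ref{thm:at:least:30:vertices:reconstructible} together with the referenced \texttt{nauty} enumeration restricts candidate cycle pairs to those in Figures~\ref{fig:3:cycles:with:equiv:t:decks}--\ref{fig:8:cycles:with:equiv:t:decks}. Each such pair determines its own $t$, read from the figure caption; the $5$-cycle pair of Figure~\ref{fig:5:cycles:with:equiv:t:decks} requires $t = -1$ and is inadmissible. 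Combining each remaining pair with every admissible $(S_G, S_H)$ yields items~$1$ through~$7$.

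If $G'$ is disconnected then so is $H'$ by the same component-counting argument, and hence $G$ and $H$ are both disconnected. Apply the disconnected graphs theorem of Section~\ref{sec:disconnected} directly to $(G, H)$. Case~$2$ of that theorem reduces to the connected $G'$ case already handled, so the genuinely new sub-case is Case~$1$: $G$ and $H$ each consist of two equal-size components whose possible orientations form a switching-stable set of size at most~$4$. By Corollary~\ref{cor:switching:stable:sets:max:deg:2} these components have at most $6$ vertices. A brief analysis rules out paths on at least $5$ vertices (all orientations of $P_n$ for $n \geq 5$ are pairwise switching-adjacent, so any switching-stable set of them would exceed size $4$), and the remaining finite case analysis, performed by hand or with \texttt{nauty}, identifies exactly the pairs displayed in Figures~\ref{fig:unioins:of:paths} and~\ref{fig:unioins:of:cycles}. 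The uniqueness of the witnessing $t$ noted in Section~\ref{sec:prelims} forbids attaching any switching-stable part to these pairs, producing items~$8$ and~$9$. The main obstacle is exactly this last finite enumeration: verifying exhaustively that no switching-stable set of at most four orientations of a connected max-degree-$2$ graph on up to $6$ vertices yields a non-reconstructible pair beyond those in Figures~\ref{fig:unioins:of:paths} and~\ref{fig:unioins:of:cycles}, which is entirely mechanical but tedious.
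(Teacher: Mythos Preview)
Your proposal is correct and follows essentially the same assembly as the paper, which presents this theorem without an explicit proof as a direct consequence of the splitting lemma, the path and cycle classifications, and the disconnected-graph theorem. One small imprecision: you write that all orientations of $P_n$ for $n\ge 5$ are ``pairwise switching-adjacent'', but what is actually needed (and what the paper uses) is only that they lie in a single switching class and number more than four up to isomorphism; also, your final ``uniqueness of $t$'' remark is redundant, since Case~1 of the disconnected-graph theorem already forces $G$ to have exactly two components and hence $S_G=\emptyset$.
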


\section{Concluding remarks}\label{sec:final}

As we mentioned earlier, all pairs of non-isomorphic
oriented graphs with the same deck have an order which is a multiple
of~4, and the largest known have 8 vertices~\cite{BondyMercier}.  These include 20 families of
2 tournaments, 4 families of 3 tournaments,
and 2 families of 4 tournaments with the same deck.
One of the latter families is shown in Figure~\ref{fig:tournaments}.

\begin{figure}
 \hfill
  \newdimen\MiddleR
     \MiddleR=1.4cm
    \begin{center}  
		\begin{tikzpicture}
		\begin{scope}
		\node[graphcollection] (rnum)  at (0,0) {%
		\BoxedCircularGraphNOBOUNDwithcustoarrow {1/0/\MiddleR, 2/45/\MiddleR, 3/90/\MiddleR,4/135/\MiddleR,5/180/\MiddleR,6/225/\MiddleR,7/270/\MiddleR,8/315/\MiddleR}
		{1/8,8/2,8/3,8/4,8/5,8/6,7/8,2/1,3/1,4/1,5/1,6/1,7/1,2/3,2/4,2/5,6/2,2/7,3/4,5/3,3/6,7/3,4/5,4/6,7/4,6/5,5/7,7/6}{stealth}\,\,
        \BoxedCircularGraphNOBOUNDwithcustoarrow {1/0/\MiddleR, 2/45/\MiddleR, 3/90/\MiddleR,4/135/\MiddleR,5/180/\MiddleR,6/225/\MiddleR,7/270/\MiddleR,8/315/\MiddleR}
        {8/1,2/8,8/3,8/4,8/5,8/6,7/8,2/1,1/3,1/4,1/5,1/6,1/7,3/2,4/2,5/2,2/6,7/2,3/4,5/3,3/6,7/3,4/5,4/6,7/4,6/5,5/7,7/6}{stealth}\,\,
		\BoxedCircularGraphNOBOUNDwithcustoarrow {1/0/\MiddleR, 2/45/\MiddleR, 3/90/\MiddleR,4/135/\MiddleR,5/180/\MiddleR,6/225/\MiddleR,7/270/\MiddleR,8/315/\MiddleR}
		{8/1,8/2,3/8,8/4,8/5,8/6,7/8,1/2,3/1,1/4,1/5,1/6,1/7,3/2,2/4,2/5,6/2,2/7,4/3,3/5,6/3,3/7,4/5,4/6,7/4,6/5,5/7,7/6}{stealth}\,\,
		\BoxedCircularGraphNOBOUNDwithcustoarrow {1/0/\MiddleR, 2/45/\MiddleR, 3/90/\MiddleR,4/135/\MiddleR,5/180/\MiddleR,6/225/\MiddleR,7/270/\MiddleR,8/315/\MiddleR}
		{8/1,8/2,8/3,4/8,8/5,8/6,7/8,1/2,1/3,4/1,1/5,1/6,1/7,2/3,4/2,2/5,6/2,2/7,4/3,5/3,3/6,7/3,5/4,6/4,4/7,6/5,5/7,7/6}{stealth}
		};%
		\end{scope}
		\end{tikzpicture}%
     \captionof{figure}{A family of four 8-vertex tournaments that have the same decks.}\label{fig:tournaments}
    \end{center}
\end{figure}
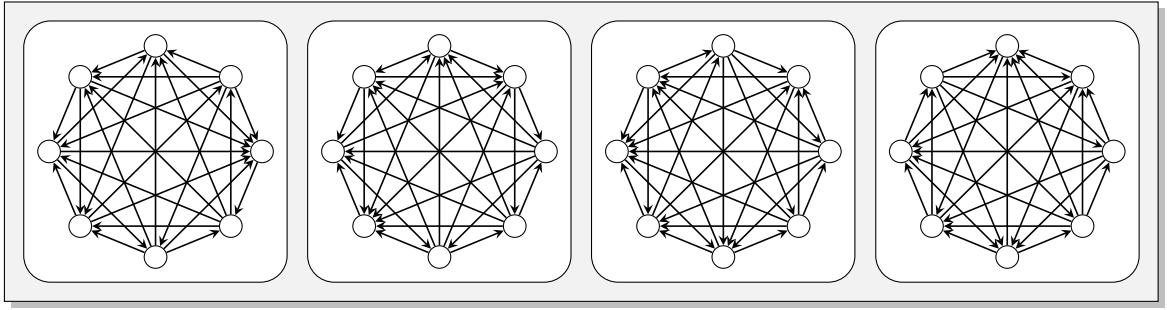

It is too expensive to test by exhaustive enumeration whether all 12-vertex oriented graphs are
determined by their decks. However, we performed a partial search on that size,
finding that all
154,108,311,168 tournaments, all 506,454,795 orientations of graphs
with maximum degree at most~3, and all 16,895,298,180 orientations
of quartic graphs are determined by their decks.

In this paper we have restricted ourselves to oriented graphs, since
a cycle of length 2 is not changed by a switching.
Nevertheless, Figure~\ref{fig:12:cycles:digraphs:with:equiv:decks} shows that new, structurally different reconstruction problems emerge when 2-cycles are allowed.
The two graphs shown have the same deck.  No similar pairs of
cycles occur on 13--20 vertices.

\begin{figure}
 \hfill
  \newdimen\LargeR
     \LargeR=1.25cm
    \begin{center}  
     \begin{tikzpicture}
     \node[graphcollection] (rnum)  at (0,0) {%
     \BoxedCircularGraphNOBOUND
     {1/0/\LargeR, 2/30/\LargeR, 3/60/\LargeR,4/90/\LargeR,5/120/\LargeR,6/150/\LargeR,7/180/\LargeR,8/210/\LargeR,9/240/\LargeR,10/270/\LargeR,11/300/\LargeR,12/330/\LargeR}
     {1/2,2/1,3/2,3/4,4/3,5/4,5/6,6/5,6/7,7/8,8/7,8/9,9/10,10/9,11/10,11/12,12/11,12/1} \,\,
     \BoxedCircularGraphNOBOUND
     {1/0/\LargeR, 2/30/\LargeR, 3/60/\LargeR,4/90/\LargeR,5/120/\LargeR,6/150/\LargeR,7/180/\LargeR,8/210/\LargeR,9/240/\LargeR,10/270/\LargeR,11/300/\LargeR,12/330/\LargeR}
     {1/2,2/1,2/3,3/4,4/3,5/4,5/6,6/5,6/7,7/8,8/7,8/9,9/10,10/9,11/10,11/12,12/11,1/12}
     }; %
     \end{tikzpicture}%
     \captionof{figure}{These 12-cycles contain 2-gons, depicted as two headed arrows. They have the same decks.}\label{fig:12:cycles:digraphs:with:equiv:decks}
    \end{center}
\end{figure}
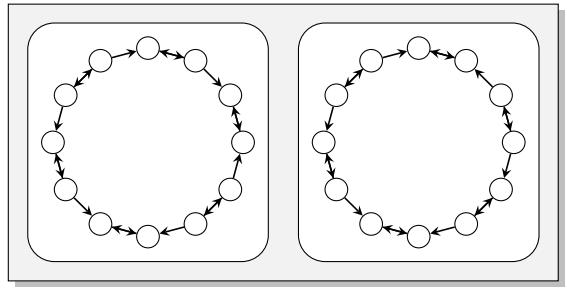

\bibliographystyle{abbrv}

\end{document}